\documentclass[11pt]{amsart}
\usepackage{amsfonts,amsthm,amsmath,amssymb,amscd,mathrsfs}
\usepackage{graphics}
\usepackage{indentfirst}
\usepackage{cite}
\usepackage{latexsym}
\usepackage[dvips]{epsfig}
\usepackage[utf8]{inputenc}
\usepackage{color}
\usepackage{esint}
\usepackage{tikz}
\usepackage[colorlinks=true,linkcolor=blue]{hyperref}
\usepackage{verbatim}
\usepackage{bm}

\setlength{\paperheight}{11in}
\setlength{\paperwidth}{8.5in}
\addtolength{\voffset}{-0.25in}
\addtolength{\hoffset}{-0.75in}
\setlength{\textwidth}{6.5in}
\setlength{\textheight}{8.35in}
\setlength{\footskip}{36pt}
\setlength{\marginparsep}{0pt}
\setlength{\marginparwidth}{0in}
\setlength{\headheight}{8pt}
\setlength{\headsep}{20pt}
\setlength{\oddsidemargin}{0.75in}
\setlength{\evensidemargin}{0.75in}


\numberwithin{equation}{section}

\newtheorem{theorem}{Theorem}[section]
\newtheorem{lemma}[theorem]{Lemma}
\newtheorem{corollary}[theorem]{Corollary}
\newtheorem{proposition}[theorem]{Proposition}

\theoremstyle{definition}
\newtheorem{definition}{Definition}
\newtheorem{assumption}{Assumption}
\newtheorem{example}{Example}

\theoremstyle{remark}
\newtheorem{remark}{Remark}

\def\be{\begin{eqnarray}}
\def\ee{\end{eqnarray}}
\def\ba{\begin{align}}
\def\ea{\end{align}}
\def\bay{\begin{array}}
\def\eay{\end{array}}
\def\bca{\begin{cases}}
\def\eca{\end{cases}}
\def\p{\partial}

\def\no{\nonumber}

\def\la{\lambda}

\def\ka{\kappa}
\def\al{\alpha}

\def\bt{\begin{theorem}}
\def\et{\end{theorem}}
\def\bc{\begin{corollary}}
\def\ec{\end{corollary}}
\def\bl{\begin{lemma}}
\def\el{\end{lemma}}
\def\bp{\begin{proposition}}
\def\ep{\end{proposition}}
\def\br{\begin{remark}}
\def\er{\end{remark}}
\def\bd{\begin{definition}}
\def\ed{\end{definition}}
\def\bpf{\begin{proof}}
\def\epf{\end{proof}}
\def\bex{\begin{example}}
\def\eex{\end{example}}
\def\bq{\begin{question}}
\def\eq{\end{question}}
\def\bas{\begin{assumption}}
\def\eas{\end{assumption}}
\def\ber{\begin{exercise}}
\def\eer{\end{exercise}}

\def\u{{\textbf u}}
\def\v{{\textbf v}}

\def\x{{\textbf x}}

\def\V{{\mathcal V}}

\def\B{{\mathcal B}}

\begin{document}
\title[Transonic Shock Flows]{Structural Stability of Transonic Shock Flows with an External Force}
\author{Shangkun Weng}
\address{School of Mathematics and Statistics, Wuhan University, Wuhan, Hubei Province, 430072, People's Republic of China.}
\email{skweng@whu.edu.cn}
\author{Wengang Yang}
\address{School of Mathematics and Statistics, Wuhan University, Wuhan, Hubei
	Province, 430072, People's Republic of China.}
\email{yangwg@whu.edu.cn}

\begin{abstract}
  This paper is devoted to the structural stability of a transonic shock passing 
  through a flat nozzle for two-dimensional steady compressible flows with an 
  external force. We first establish the existence and uniqueness of one dimensional 
  transonic shock solutions to the steady Euler system with an external force by 
  prescribing suitable pressure at the exit of the nozzle when the upstream flow is 
  a uniform supersonic flow. It is shown that the external force helps to stabilize 
  the transonic shock in flat nozzles and the shock position is uniquely determined. 
  Then we are concerned with the structural stability of these transonic shock 
  solutions when the exit pressure is suitably perturbed. One of the new ingredients 
  in our analysis is to use the deformation-curl decomposition to the steady Euler 
  system developed in \cite{WengX2019} to deal with the transonic shock problem.
\end{abstract}

\keywords{transonic shock, stabilization effect of external force, structural stability, deformation-curl decomposition.}
\subjclass[2010]{76H05, 35M12, 35L65, 76N15}
\date{}
\maketitle

\section{Introduction and main results} \noindent
\par The studies of transonic shock solutions for inviscid compressible flows in different kinds of nozzles had a long history and had obtained many important new achievements during the past twenty years. Courant and Friedrichs \cite{CF} had described the transonic shock
phenomena in a de Laval nozzle whose cross section decreases first and then increases. It was observed in experiment that if the
upcoming flow becomes supersonic after passing through the throat of the nozzle, to match the prescribed appropriately large exit pressure, a shock front intervenes at some place in the diverging part of the nozzle and the gas is compressed and slowed down to subsonic speed.

People first used the quasi one dimensional model to study the transonic shock problem \cite{L1958,CF,egm84,L1982}. The structural stability of multidimensional transonic shocks in flat or diverging nozzles were further investigated in \cite{ChenF, XY2005, XinY2008} using the steady potential flows with different kinds of boundary conditions. In particular, \cite{XY2005, XinY2008} proved that the stability of transonic shocks for potential flows is usually ill-posed under the perturbations of the exit pressure. Many researchers also considered the transonic shock problem in the flat or almost flat nozzles with the exit pressure satisfying some special constraint, see \cite{C2005,C2008,cy08,LY2008,XYY2009} and the references therein. Recently, there is an interesting progress on the stability and existence of transonic shock solutions to the two dimensional and three dimensional axisymmetric steady compressible Euler system in an almost flat finite nozzle with the receiver pressure prescribed at the exit of the nozzle (see \cite{FangXin, FangGao}), where the shock position was uniquely determined.

The structural stability of the transonic shock problem in two dimensional divergent
nozzles under the perturbations for the exit pressure was first established in
\cite{LXY2009a} when the opening angle of the nozzle is suitably small. Later on,
this restriction was removed in \cite{LXY2009b,LXY2013}. Furthermore, the transonic
shock in general two dimensional straight divergent nozzles was shown in
\cite{LXY2013} to be structurally stable under generic perturbations for both the
nozzle shape and the exit pressure. The existence and stability of transonic shock
for three dimensional axisymmetric flows without swirl in a conic straight nozzle
were established in \cite{lxy10a,lxy10b} with respect to small perturbations of the
exit pressure. For the structural stability under the axisymmetric perturbation of
the nozzle wall, a modified Lagrangian coordinate was introduced in \cite{WengXX} to
deal with the corner singularities near the intersection points of the shock surface
and nozzle boundary and the artificial singularity near the axis simultaneously.
Most recently, the authors in \cite{WengXY2021a,WengXY2021b} studied radially
symmetric transonic flow with/without shock in an annulus. Thanks to the effect of
angular velocity, it was found in \cite{WengXY2021a} that besides the well-known
supersonic-subsonic shock in a divergent nozzle as in the case without angular
velocity, there exists a supersonic-supersonic shock solution, where the downstream
state may change smoothly from supersonic to subsonic. Furthermore, there exists a
supersonic-sonic shock solution where the shock circle and the sonic circle coincide.


In this paper, we will consider similar transonic shock
phenomena occurring in a flat nozzle when the fluid is exerted with an external force. The 2-D steady compressible isentropic Euler system with external force are of the form
\begin{eqnarray}\label{2deuler-force}
\begin{cases}
\p_{x_1} (\rho u_1)+ \p_{x_2}(\rho u_2)=0,\\
\p_{x_1} (\rho u_1^2+P(\rho))+ \p_{x_2}(\rho u_1 u_2)=\rho \partial_{x_1} \Phi,\\
\p_{x_1} (\rho u_1u_2)+ \p_{x_2}(\rho u_2^2+P(\rho))=\rho \p_{x_2} \Phi,
\end{cases}
\end{eqnarray}
where $(u_1,u_2)={\bf u}:\mathbb{R}^2\rightarrow\mathbb{R}^2$ is the unknown
velocity filed  and  $\rho:\mathbb{R}^2\rightarrow\mathbb{R}$ is the density, and
$\Phi(x_1,x_2)$ is a given potential function of external force. For the ideal
polytropic gas, the equation of state is given by $P(\rho)=A \rho^{\gamma}$, here
$A$ and $\gamma$ ($1<\gamma<3$) are positive constants. We take $A=1$ throughout
this paper for the convenience.

To this end, let's firstly focus on the 1-D steady compressible flow with an external force on an interval $I=[L_0,L_1]$,
which is governed by
\begin{eqnarray}\label{1df}\begin{cases}
		(\bar{\rho} \bar{u})'(x_1)=0,\\
		\bar{\rho} \bar{u} \bar{u}'+ \frac{d}{dx_1} P(\bar{\rho})= \bar{\rho}
		\bar{f}(x_1),\\
		\bar{\rho}(L_0)=\rho_0>0,\ \ \bar{u}(L_0)= u_0>0,
\end{cases}\end{eqnarray}
where we assume that the flow state at the entrance $x_1=L_0$ is supersonic, meaning that $u_0^2>c^2(\rho_0)=\gamma
\rho_0^{\gamma-1}$. 

Denote $J= \bar{\rho} \bar{u}=\rho_0 u_0>0$, then it follows from \eqref{1df}
that
\be\label{1df1}\begin{cases}
	\bar{\rho}(x_1)=\frac{J}{\bar{u}(x_1)},\\
	((\bar{u})^{\gamma+1}-\gamma J^{\gamma-1}) \bar{u}'= \bar{u}^{\gamma} \bar{f}.
\end{cases}\ee
Also one has
\be\label{1df001}
&&\bar{u}'= \frac{\bar{u}\bar{f}}{\bar{u}^2-c^2(\bar{\rho})},\quad
\bar{\rho}'=-\frac{\bar{\rho}\bar{f}}{\bar{u}^2-c^2(\bar{\rho})},\\\label{1df002}
&&\frac{d}{dx_1} \bar{M}^2(x_1)=\frac{(\gamma+1) \bar{M}^2}{\bar{M}^2-1} \frac{\bar{f}}{c^2(\bar{\rho})},
\ee
where $\bar{M}(x_1)=\frac{\bar{u}(x_1)}{c(\bar{\rho})}$ is the Mach number.

Since $\bar{M}^2(L_0)>1$, it follows from
\eqref{1df002} that if the external force satisfies
\be\label{1df04}
\bar{f}(x_1)>0, \ \ \forall L_0<x_1<L_1,
\ee
then the problem \eqref{1df} has a global supersonic solution $(\bar{\rho}^-, \bar{u}^-)$ on $[L_0,L_1]$. If one prescribes a large enough end pressure at $x_1=L_1$, a shock will form at some point $x_1=L_s\in (L_0,L_1)$ and the gas is compressed and slowed down to
subsonic speed, the gas pressure will increase to match the given end pressure.
Mathematically, one looks for a shock $x_1=L_s$ and smooth functions $(\bar{\rho}^{\pm}, \bar{u}^{\pm},
\bar{P}^{\pm})$ defined on $I^+=[L_{s}, L_1]$ and $I^-=[L_0,L_s]$ respectively, which solves \eqref{1df1} on $I^{\pm}$ with the jump
at the shock $x_1=L_{s}\in (L_0, L_1)$ satisfying the physical entropy condition
$[\bar{P}(L_{s})]=\bar{P}^+(L_s)-\bar{P}^-(L_s)>0$ and the Rankine-Hugoniot conditions
\be\label{1df11}\label{rh}\begin{cases}
	[{\bar \rho} {\bar u}](L_s)=0,\\
	[{\bar\rho} {\bar u}^2+P({\bar\rho})](L_s)=0.
\end{cases}\ee
and also the boundary conditions
\begin{eqnarray}\label{bd1}
	&&\rho(L_0)=\rho_{0},\ u(L_0)=u_{0}>0,\\\label{bd2}
	&&\bar{P}(L_1)= P_{e}.
\end{eqnarray}

We will show that there is a unique transonic shock solution to the 1-D
Euler system when the end pressure $P_e$ lies in a suitable interval. Such a
problem will be solved by a shooting method employing the monotonicity relation between the shock position and the end pressure.

\begin{lemma}\label{1dtransonic-shock} Suppose that the initial state $(u_0,\rho_0)$
at $x_1=L_0$ is supersonic and the external force $f$ satisfying \eqref{1df04},
there exists two positive constants $P_0,P_1>0$ such that if the end pressure
$P_e\in (P_1, P_0)$, there exists a unique transonic shock solution $(\bar{u}^-,
\bar{\rho}^-)$ and $(\bar{u}^{+},\bar{\rho}^{+})$ defined on $I^-=[L_0,L_s)$ and
$I^+=(L_s, L_1)$ respectively, with a shock located at $x_1=L_{s}\in (L_0,L_1)$. In
addition, the shock position $x_1=L_s$ increases as the exit pressure $P_{e}$
decreases. Furthermore, the shock position $L_s$ approaches to $L_1$ if $P_{e}$ goes
to $P_1$ and $L_s$ tends to $L_0$ if $P_{e}$ goes to $P_0$.
\end{lemma}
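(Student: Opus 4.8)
The plan is to solve the problem by a shooting argument in the shock position $L_s$, using the momentum flux as the natural monotone quantity. First I would integrate the supersonic branch: starting from the supersonic data at $L_0$ and using \eqref{1df001}--\eqref{1df002}, the hypothesis $\bar f>0$ from \eqref{1df04} forces $\frac{d}{dx_1}\bar M^2>0$ on the supersonic region, so the Mach number stays above $1$ and the supersonic solution $(\bar\rho^-,\bar u^-)$ exists on all of $[L_0,L_1]$. I then introduce the momentum flux $m(x_1)=\bar\rho\bar u^2+P(\bar\rho)$; writing $\bar u=J/\bar\rho$ it equals $g(\bar\rho):=J^2/\bar\rho+\bar\rho^{\gamma}$, and the second equation of \eqref{1df} gives $m'=\bar\rho\bar f$. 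Since $g$ is strictly convex with a unique minimum at the sonic density $\rho_*=(J^2/\gamma)^{1/(\gamma+1)}$, decreasing on the supersonic side and increasing on the subsonic side, the state is recovered monotonically from $m$ on each branch.

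Next, for a trial shock location $L_s\in(L_0,L_1)$, the Rankine--Hugoniot relations \eqref{rh} say precisely that $m$ is continuous and $J$ unchanged across $L_s$; hence the downstream state solves $g(\bar\rho^+(L_s))=g(\bar\rho^-(L_s))=m^-(L_s)$, and by convexity of $g$ the entropy condition $[\bar P]>0$ selects the unique subsonic root $\bar\rho^+(L_s)>\rho_*>\bar\rho^-(L_s)$. Integrating \eqref{1df001} downstream from this state, \eqref{1df002} gives $\frac{d}{dx_1}\bar M^2<0$, so the flow decelerates and stays strictly subsonic; since $m=g(\bar\rho)\ge\bar\rho^{\gamma}$ yields $\bar\rho\le m^{1/\gamma}$ and $m'=\bar\rho\bar f\le m^{1/\gamma}\bar f$ with $1/\gamma<1$, the solution cannot blow up and exists up to $L_1$. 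Thus every $L_s$ produces a genuine transonic shock solution, and I define $\mathcal P(L_s):=\bar P^+(L_1)=(\bar\rho^+(L_1))^{\gamma}$, continuous in $L_s$ by continuous dependence of ODE solutions on the starting point and data. Because the exit is subsonic, $m(L_1)=g(\bar\rho^+(L_1))$ is strictly increasing in $\bar\rho^+(L_1)$, so it is equivalent to track the exit momentum $\mathcal M(L_s):=m(L_1)$.

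The heart of the argument, and the step I expect to be the main obstacle, is the strict monotonicity $\mathcal M'(L_s)<0$, which I would prove by a crossing/comparison argument rather than by differentiation. Fix $L_s^1<L_s^2$ and compare, on the overlap $[L_s^1,L_s^2]$, the subsonic momentum $m_1$ of the $L_s^1$-solution with the still-supersonic upstream momentum $m^-$. They agree at $L_s^1$, but at any point where the two momenta coincide the corresponding densities are the subsonic and supersonic roots of $g(\cdot)=m$, so $G_{\mathrm{sub}}(m)>\rho_*>G_{\mathrm{sup}}(m)$ and hence $m_1'=G_{\mathrm{sub}}(m_1)\bar f>G_{\mathrm{sup}}(m^-)\bar f=(m^-)'$; this excludes any first crossing and forces $m_1>m^-$ on $(L_s^1,L_s^2]$. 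In particular $m_1(L_s^2)>m^-(L_s^2)=m_2(L_s^2)$, the downstream momenta at $L_s^2$. Since on $[L_s^2,L_1]$ both $m_1,m_2$ solve the same scalar subsonic equation $m'=G_{\mathrm{sub}}(m)\bar f$, uniqueness prevents them from crossing, whence $m_1(L_1)>m_2(L_1)$, i.e.\ $\mathcal M(L_s^1)>\mathcal M(L_s^2)$. This yields strict monotone decrease of $\mathcal M$, hence of $\mathcal P$.

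Finally I would read off the endpoints and close the shooting. Letting $L_s\to L_1^-$ the subsonic evolution interval collapses, so $\mathcal M(L_s)\to m^-(L_1)$ and $\mathcal P(L_s)\to P_1:=(G_{\mathrm{sub}}(m^-(L_1)))^{\gamma}$, the post-shock pressure of a shock sitting at the exit; letting $L_s\to L_0^+$ gives the full subsonic evolution of $m$ from $m^-(L_0)=\rho_0u_0^2+\rho_0^{\gamma}$ over $[L_0,L_1]$, and hence $\mathcal P(L_s)\to P_0>P_1$. Since $\mathcal P$ is continuous and strictly decreasing from $P_0$ to $P_1$, for every $P_e\in(P_1,P_0)$ there is a unique $L_s\in(L_0,L_1)$ with $\mathcal P(L_s)=P_e$, which is exactly the asserted existence and uniqueness, the monotone correspondence that $L_s$ increases as $P_e$ decreases, and the stated limits $L_s\to L_1$ as $P_e\to P_1$ and $L_s\to L_0$ as $P_e\to P_0$.
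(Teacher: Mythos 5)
Your proposal is correct, but the key monotonicity step is proved by a genuinely different mechanism than in the paper. The paper works with Bernoulli's invariant: it differentiates the conserved quantity $\frac12\bar u^2+\frac{\gamma}{\gamma-1}\bar\rho^{\gamma-1}-\bar\Phi$ along the subsonic branch with respect to the shock position $L_s$, uses the differentiated Rankine--Hugoniot relation \eqref{1df12} to evaluate $\frac{d\bar u^+(L_s)}{dL_s}$, and arrives at the explicit derivative identity \eqref{ldf13} with right-hand side $\bar f(L_s)(\bar u^+(L_s)-\bar u^-(L_s))/\bar u^-(L_s)<0$, so that $\bar\rho^+(L_1)$, hence $P_e$, is a strictly decreasing $C^1$ function of $L_s$. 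You instead track the momentum flux $m=\bar\rho\bar u^2+P(\bar\rho)=g(\bar\rho)$ with $m'=\bar\rho\bar f$, observe that the Rankine--Hugoniot conditions \eqref{rh} say exactly that $J$ and $m$ are continuous across the shock while the entropy condition selects the subsonic root of the convex function $g$, and obtain monotonicity by a crossing argument: at any touching point of the two momenta one has $G_{\mathrm{sub}}(m)>\rho_*>G_{\mathrm{sup}}(m)$, so the subsonic momentum strictly outgrows the supersonic one, and uniqueness for the scalar ODE $m'=G_{\mathrm{sub}}(m)\bar f$ then propagates the ordering to the exit. Your route is more elementary and robust --- it needs no differentiability of the exit state in $L_s$, makes the stabilization mechanism transparent (the subsonic branch accumulates momentum faster than the supersonic branch, precisely because $\bar f>0$), and simultaneously supplies the global existence of both branches via the sublinear bound $m'\le m^{1/\gamma}\bar f$, which the paper leaves implicit. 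What the paper's computation buys in exchange is a quantitative rate: the explicit formula for $\frac{d\bar\rho^+(L_1)}{dL_s}$ in \eqref{ldf13}, which measures how strongly the exit pressure responds to shifts of the shock and is of independent use in the stability analysis. Your identification of the endpoint values $P_1=\bigl(G_{\mathrm{sub}}(m^-(L_1))\bigr)^{\gamma}$ and $P_0$ matches the paper's limiting pressures at $L_s=L_1$ and $L_s=L_0$, and the shooting conclusion is the same.
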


\begin{proof}
	
	The existence and uniqueness of smooth supersonic flow $(\bar{u}^-,
	\bar{\rho}^-)$ starting from $(\rho_0,u_0)$ on $[L_0,L_1]$ is trivial. Suppose
	the shock occurs at $x_1=L_s\in (L_0,L_1)$, then it is well-known that there
	exists a unique subsonic state $(\bar{u}^+(L_s), \bar{\rho}^+(L_s))$ satisfying
	the Rankine-Hugoniot conditions \eqref{rh} and the entropy condition. With
	$(\bar{u}^+(L_s), \bar{\rho}^+(L_s))$ as the initial data, the equation
	\eqref{1df} has a unique smooth solution $(\bar{u}^+, \bar{\rho}^+)$ on
	$[L_s,L_1]$. Denote $p_e= (\bar{\rho}^+(L_1))^{\gamma}$. In the following, we
	show that the monotonicity between the shock position $x_1=L_s$ and the exit
	pressure $P_e=(\bar{\rho}^+(L_1))^{\gamma}$. $\bar{\rho}^+(L_1)$ is regarded as
	a function of $L_s$.
	Since $(\bar{\rho}^+ \bar{u}^+)(L_s)=(\bar{\rho}^- \bar{u}^-)(L_s)=J=\rho_0
	u_0>0$, then
	\be\label{1df12}
	\bar{u}^-(L_s)+ \frac{ J^{\gamma-1}}{(\bar{u}^-(L_s))^{\gamma}}=\bar{u}^+(L_s)+
	\frac{ J^{\gamma-1}}{(\bar{u}^+(L_s))^{\gamma}}.
	\ee
	It follows from the second equation in \eqref{1df} that
	\be\no
	\frac12
	(\bar{u}^+(L_1))^2+\frac{\gamma}{\gamma-1}({\bar\rho}^+(L_1))^{\gamma-1}-
	\bar\Phi(L_1)=
	 \frac12
	(\bar{u}^+(L_s))^2+\frac{\gamma}{\gamma-1}(\bar{\rho}^+(L_s))^{\gamma-1}-
	\bar\Phi(L_s).
	\ee
	Differentiating with respect to $L_s$, one deduces that
	\be\label{ldf13}
	&&\left(\gamma(\bar{\rho}^+(L_1))^{\gamma-2}-\frac{J^2}{(\bar{\rho}^+(L_1))^3}\right)\frac{d
	 \bar{\rho}^+(L_1)}{d L_s}\\\no
	&&=\left(\gamma(\bar{\rho}^+(L_s))^{\gamma-2}-\frac{J^2}{(\bar{\rho}^+(L_s))^3}\right)\frac{d
	 \bar{\rho}^+(L_s)}{d L_s}-\bar{f}(L_s)=:I.
	\ee
	Also \eqref{1df12} yields that
	\be\no
	\left\{1-\frac{\gamma J^{\gamma-1}}{(\bar{u}^+(L_s))^{\gamma+1}}\right\}\frac{d
	\bar{u}^+(L_s)}{d L_s}=\left\{1-\frac{\gamma
	J^{\gamma-1}}{(\bar{u}^-(L_s))^{\gamma+1}}\right\}\frac{d \bar{u}^-(L_s)}{d
	L_s}=\frac{\bar{f}(L_s)}{\bar{u}^-(L_s)}.
	\ee
	Finally, we conclude that
	\be\no
	I&=&-\left\{\gamma
	(\rho^+(L_s))^{\gamma-1}-\frac{J^2}{(\rho^+(L_s))^2}\right\}\frac{1}{\bar{u}^+(L_s)}\frac{d
	 \bar{u}^+(L_s)}{d L_s}-\bar{f}(L_s)\\\no
	&=&\frac{\bar{f}(L_s)(\bar{u}^+(L_s)-\bar{u}^-(L_s))}{\bar{u}^-(L_s)}<0.
	\ee
	Since the coefficients
	\be\no
	\gamma(\bar{\rho}^+(L_1))^{\gamma-2}-\frac{J^2}{(\bar{\rho}^+(L_1))^3}>0,
	\ee
	then (\ref{ldf13}) implies that the end density $\bar\rho^+(L_1)$ is a strictly
	decreasing function of the shock position $x_1=L_s$. It follows that the end
	pressure $P_e=(\bar\rho^+(L_1))^{\gamma}$ is a strictly decreasing and
	continuous differentiable function on the shock position $x_1=L_s$. In
	particular, when $L_s=L_0$ and $L_s=L_1$, there are two different end pressure
	$P_1,P_2$ with $P_0>P_1$. Hence, by the monotonicity one can obtain a transonic
	shock for the end pressure $P_e\in(P_1,P_0)$.
\end{proof}
\begin{remark}
{\it Lemma \ref{1dtransonic-shock} shows that the external force helps to stabilize the transonic shock in flat nozzles and the shock position is uniquely determined.
}
\end{remark}

The one dimensional transonic shock solution $(\bar{u}^{\pm}, \bar{\rho}^{\pm})$ with a shock occurring at $x_1= L_s$ constructed in Lemma \ref{1dtransonic-shock} will be called the background solution in this paper. The extension of the subsonic flow $(\bar
u^+(x_1),\bar\rho^+(x_1))$ of the background solution to $L_s-\delta_0<x_1<L_1$ for a small positive number $\delta_0$ will be denoted by
$(\hat u^+(x_1),\hat\rho^+(x_1))$.

\begin{figure}[h]
	\centering
	\begin{tikzpicture}
		\draw[->,thick] (-1,0)  -- (7.5,0) node (xaxis) [right] {$x_1$};
		\draw[->,thick] (0,-2.5) -- (0,2.5) node (yaxis) [above] {$x_2$};
		
		\draw[-] (1,1.5) -- (6,1.5);
		\draw[-] (1,-1.5) -- (6,-1.5);
		\draw[-] (1,1.5) -- (1,-1.5) node [left]{$L_0$};
		\draw[-] (6,1.5) -- (6,-1.5)node [right]{$L_1$};
		
		\draw[-] (3.5,1.5) -- (3.5,-1.5) node [below]{$x_1=L_s$};
		
		\draw[-] (6,1.5) -- (6,0.7) node [right]{$P=P_e+\epsilon P_{ex}$};
		
		\draw[->] (1.5,0.8)  node [above]{$\quad\quad\quad\quad supesonic$}--
		(3,0.8);
		\draw[->] (1.5,-0.8) node [below]{$\quad\quad\quad\quad \Omega^-$}--
		(3,-0.8);
		
		\draw[->] (4,0.8) node [above]{$\quad\quad\quad\quad subsonic$}-- (5.5,0.8);
		\draw[->] (4,-0.8) node [below]{$\quad\quad\quad\quad \Omega^+$} --
		(5.5,-0.8);
		
		\draw[->] (3,0.27) node [left]{$shock$} -- (3.4,0.27) node [right]
		{$\,\,
			x_1=\xi(x_2)$};
		
		\draw[domain=2:5,smooth,rotate around ={90:(3.5 ,0)}]
		plot(\x,{(0.08)*sin(21.5*\x r)});	
	\end{tikzpicture}
	\caption{Nozzle}
\end{figure}

It is natural to focus on the
structural stability of this transonic shock flows. For simplicity, we only investigate the structural stability under
suitable small perturbations of the end pressure. Therefore, the supersonic incoming flow is unchanged and remains to be $(\bar{u}^-(x_1), 0, \bar{\rho}^-(x_1))$.

\par Assume that the possible shock curve $\Sigma$ and the flow behind the shock are denoted by $x_1=\xi(x_2)$ and $(u_1^+,u_2^+,P^+)(x)$ respectively (See Figure 1). Let $\Omega^+=\{(x_1,x_2): \xi(x_2)<x_1<L_1, -1<x_2<1\}$  denotes the subsonic region of the flow. Then the Rankine-Hugoniot conditions on $\Sigma$ gives
\be\label{RH-condi}
\begin{cases}
[\rho u_1]- \xi'(x_2) [\rho u_2]=0,\\
[\rho u_1^2+P]- \xi'(x_2) [\rho u_1 u_2]=0,\\
[\rho u_1 u_2]- \xi'(x_2) [\rho u_2^2+P]=0.
\end{cases}
\ee
\par In addition, the pressure $P$ satisfies the physical entropy conditions
\be\label{entr-condi}
P^+(x)>P^-(x) \quad\text{on}\,\Sigma.
\ee
\par Since the flow is tangent to the nozzle walls $x_2=\pm 1$, then
\be\label{u2-BC}
u_2^+(x_1,\pm 1)=0.
\ee
\par The end pressure is perturbed by
\be\label{pres-condi}
P^+(L_1,x_2)=P_e+\epsilon P_{ex}(x_2),
\ee
due to some technical reasons, we may readily suppose that $P_{ex}(x_2)=P_e^{\frac{1}{\gamma}}\hat P_{ex}(x_2)\in C^{2,\alpha}([-1,1]) (\alpha\in (0,1))$ satisfies the compatibility conditions
\be\label{comp-pressure}
\hat P_{ex}'(\pm 1)=0.
\ee

\par
The following theorem gives the main results of this paper.
\begin{theorem}\label{thm1}
Under the assumptions on the external force and the exit pressure, there exists a constant $\epsilon_0>0$ such that for all $\epsilon\in (0,\epsilon_0]$, the system \eqref{2deuler-force},\eqref{RH-condi}-\eqref{pres-condi} has a unique transonic shock solution $(u_1^+(x),u_2^+(x),P^+(x);\xi(x_2))$ which admits the following properties:\\
(i).\,The shock $x_1=\xi(x_2)\in C^{3,\al}([-1,1])$, and satisfies
\be\label{copar_shoc}
\|\xi(x_2)-L_s\|_{C^{3,\al}([-1,1])}\leq C\epsilon,
\ee
where the positive constant $C$ only depends on the background solution, the exit pressure and $\al$.
 \\
(ii).\,The velocity and pressure in subsonic region $(u_1^+,u_2^+,P^+)(x)\in C^{2,\alpha}(\bar\Omega^+)$, and there holds
\be\label{copar_u1u2p}
\|(u_1^+,u_2^+,P^+)(x)-(\hat u,0,\hat P)\|_{C^{2,\al}(\bar\Omega^+)}\leq C\epsilon,
\ee
where $\Omega^+=\{(x_1,x_2): \xi(x_2)<x_1<L_1, -1<x_2<1\}$ is the subsonic region and
$(\hat u,0,\hat P)=(\hat u(x_1),0,P(\hat\rho(x_1)))$ is the extended background
solution.
\end{theorem}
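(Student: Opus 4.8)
The plan is to reformulate the free-boundary problem as a fixed-point problem for the shock position and the subsonic state, then close it by a contraction argument built on the deformation-curl decomposition advertised in the introduction. First I would \emph{fix the free boundary} by introducing a coordinate transformation that flattens the unknown shock $x_1=\xi(x_2)$ to a fixed line, or equivalently work in the slab $\Omega^+$ with $\xi$ treated as an unknown. The natural starting point is to linearize around the extended background solution $(\hat u,0,\hat P)$: write $(u_1^+,u_2^+,P^+)=(\hat u,0,\hat P)+(\delta u_1,\delta u_2,\delta P)$ and $\xi(x_2)=L_s+\epsilon\,\sigma(x_2)$, and show that each perturbation is $O(\epsilon)$. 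Because the incoming supersonic flow is \emph{unchanged} and the jump $[P]$ at the background shock is bounded away from zero (the entropy condition in Lemma \ref{1dtransonic-shock}), the Rankine-Hugoniot relations \eqref{RH-condi} can be solved for the downstream Cauchy data on $\Sigma$ as smooth functions of the shock slope $\xi'$ and the upstream state; this supplies the boundary conditions on the shock for the subsonic problem.

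The key structural step is to \emph{decouple} the subsonic Euler system using the deformation-curl decomposition of \cite{WengX2019}. In the subsonic region one reformulates \eqref{2deuler-force} so that the hyperbolic (transport) part — governing the Bernoulli quantity and the vorticity/entropy along streamlines — is separated from the elliptic part governing a second-order equation for a velocity potential or for the velocity field itself. Concretely, I would derive transport equations for the Bernoulli function $B=\tfrac12|{\bf u}|^2+\tfrac{\gamma}{\gamma-1}\rho^{\gamma-1}-\Phi$ and for the vorticity $\omega=\partial_{x_1}u_2-\partial_{x_2}u_1$, integrate them along the (perturbed) streamlines starting from the shock, and feed the results as source terms into a second-order elliptic equation for the remaining unknown. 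The boundary conditions are: the R.-H. data on $\Sigma$, the slip condition $u_2^+=0$ on $x_2=\pm1$ from \eqref{u2-BC}, and the perturbed exit pressure \eqref{pres-condi} on $x_1=L_1$. The compatibility condition \eqref{comp-pressure}, $\hat P_{ex}'(\pm1)=0$, is exactly what guarantees the corner compatibility needed for $C^{2,\alpha}$ regularity up to the corners where $\Sigma$ and $x_1=L_1$ meet the walls.

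The argument then proceeds by a \emph{Banach fixed-point iteration}. Given a candidate shock position $\sigma$ and a candidate subsonic state, one (i) solves the transport equations to update $B$ and $\omega$, (ii) solves the elliptic boundary value problem (with Schauder estimates in $C^{2,\alpha}(\overline{\Omega^+})$) to update the velocity and pressure, and (iii) updates the shock position from the remaining R.-H. relation — the one that has \emph{not} been used to prescribe Cauchy data, which plays the role of an evolution/solvability condition determining $\xi$. The crucial point, inherited from the one-dimensional analysis, is that the solvability of the $\xi$-equation hinges on the strict monotonicity relation between shock position and exit pressure established in Lemma \ref{1dtransonic-shock}; this non-degeneracy is what makes the shock position \emph{uniquely} determined and lets one close the loop for $\sigma$. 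I expect the main obstacle to be precisely this last step: obtaining a well-posed and contractive equation for the free boundary $\xi$, because it couples the elliptic solution to the shock geometry through the nonlinear R.-H. conditions and requires a delicate choice of which combination of \eqref{RH-condi} determines the Cauchy data versus the shock location. A secondary technical difficulty is the elliptic regularity at the two corners, where one must verify that the linearized oblique/Dirichlet conditions from the shock and exit meet the wall conditions compatibly; the hypothesis \eqref{comp-pressure} is tailored to handle this. Once contractivity is established on a closed ball of radius $O(\epsilon)$ in the product space $C^{3,\alpha}([-1,1])\times C^{2,\alpha}(\overline{\Omega^+})$, the fixed point yields the unique solution satisfying \eqref{copar_shoc} and \eqref{copar_u1u2p}.
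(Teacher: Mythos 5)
Your outline coincides with the paper's strategy in its broad strokes (flatten the shock, linearize around the extended background, transport the Bernoulli function along characteristics, solve an elliptic problem for the velocity, close by Banach fixed point), but you have explicitly deferred the one step on which the whole theorem turns, and it is precisely there that the paper's real work lies. You write that the ``main obstacle'' is obtaining a well-posed, contractive equation for $\xi$, and you gesture at the monotonicity of Lemma \ref{1dtransonic-shock} as the source of non-degeneracy — but the one-dimensional monotonicity is not invoked at all in the two-dimensional argument. What the paper actually does is: integrate the linearized shock relation \eqref{shock3} so that the entire front is determined up to the single scalar $v_4(-1)$ (the shock height on the wall $x_2=-1$), observe that in the deformation-curl formulation the exit-pressure condition becomes \emph{nonlocal} (the Bernoulli perturbation at the exit carries shock-front data along characteristics, see \eqref{v1v2_BC_lin}), and then introduce the potential $\phi$ in \eqref{phi} to localize this condition, arriving at the second-order nonlocal elliptic problem \eqref{phi_eq}--\eqref{phi_BC} with the free constant $\ka$. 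Unique solvability of $(\phi,\ka)$ is then proved via Lax--Milgram (Lemma \ref{sol_inhomo}), the Fredholm alternative, and a maximum-principle argument (Proposition \ref{uniq_exis}), and it hinges on the sign conditions \eqref{sol_condi}: the coefficients $a_0$, $a_1$, $a_3$ computed in Lemma \ref{phi_rest} are positive \emph{because} $\bar f>0$. This quantitative positivity — the stabilization effect of the external force — is the concrete mechanism that pins down the shock position, and your proposal contains no substitute for it; without it the linearized problem with the extra unknown constant has no reason to be uniquely solvable (indeed, for $\bar f\equiv 0$ in a flat nozzle it is not, which is the ill-posedness of \cite{XY2005,XinY2008}).

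Two further points. First, you propose contractivity ``on a closed ball of radius $O(\epsilon)$ in $C^{3,\alpha}([-1,1])\times C^{2,\alpha}(\overline{\Omega^+})$''; contraction in the top norms fails here because the difference estimates for the characteristics and the free boundary lose a derivative (compare \eqref{est_beta}, which controls $\beta_1-\beta_2$ in $C^{1,\al}$ only by $\hat w_4$ in $C^{2,\al}$). The paper keeps the a priori bounds in the strong norms $(C^{2,\al})^3\times C^{3,\al}$ to map $\V_\delta$ into itself, but proves contraction only in the weaker space $(C^{1,\al}(\bar Q))^3\times C^{2,\al}[-1,1]$, see \eqref{est_diff_fin}; you would need the same two-norm device. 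Second, a minor structural remark: you plan to transport the vorticity separately along streamlines, but in this isentropic two-dimensional setting the curl is slaved algebraically to the Bernoulli function by the second equation of \eqref{def-curl}, $\p_1u_2-\p_2u_1=-\p_2B/u_1$, so only $B$ needs a transport equation — this is not an error, just redundancy, and the paper's version is what makes the elliptic--hyperbolic decoupling clean. Your identification of \eqref{comp-pressure} as the corner-compatibility hypothesis is correct and matches the paper's symmetric-extension argument \eqref{sym_ext}.
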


Our proof is influenced by the approach developed in
\cite{LXY2009a,LXY2009b,LXY2013}, yet the reformulation of the problem is different from there. It is well-known that steady Euler equations are hyperbolic-elliptic coupled in subsonic region. The entropy and Bernoulli's function are conserved along the particle path, while the pressure and the flow angle satisfy a first order elliptic system in subsonic region. These facts are widely used in the structural stability analysis for the transonic shock problems in flat or divergent nozzles, one may refer to \cite{C2005,CCS2006,ccf07,LXY2009a,LXY2009b,LXY2013,s95,XY2008full,Y2006} and the reference therein. Here we resort to a different decomposition based on the deformation and curl of the velocity developed in \cite{WengX2019,Weng2019} for three dimensional steady Euler and Euler-Poisson systems. The idea in that decomposition is to rewrite the density equation as a Frobenius inner product of a symmetric matrix and the deformation matrix by using the Bernoulli's law. The vorticity is resolved by an algebraic equation of the Bernoulli's function and the entropy. We should mention that there are several different decompositions to three dimensional steady Euler system \cite{cx14,C2008,cy08,lxy16,w15,XY2008full} developed by many researchers for different purposes. An interesting issue that deserves further discussion is when using the deformation-curl decomposition to deal with the transonic shock problem, the end pressure boundary condition becomes nonlocal since it involves the information from the shock front (see \eqref{v1v2_BC_lin}). However, this nonlocal boundary condition reduces to be local after introducing the potential function (see \eqref{phi_BC}).

\par The rest of this paper will be organized as follows.  In Section 2, we
reformulate the original 2-D problem  (\ref{2deuler-force})-(\ref{pres-condi}) by
deformation-curl decomposition developed in \cite{WengX2019,Weng2019} so that
one can rewrite the system (\ref{2deuler-force}) with the velocity and the Bernoulli function. We
obtain a $2\times 2$  first order system for the velocity field, a transport type
equation for the  Bernoulli function and the first order ordinary differential
equation for the shock after linearization. In Section 3, we design an elaborate
iteration scheme inspired by the works \cite{LXY2009b} for the nonlinear system. The
investigation of well-posedness and regularity for the linear system are given in
the reminder part of this section. In section 4, we prove the main existence and uniqueness theorem.

\section{Reformulation of the problem }

Different from previous works on transonic shock problems \cite{C2005,CCS2006,ccf07,LXY2009a,LXY2009b,LXY2013}, we will use the deformation-curl decomposition developed in \cite{WengX2019,Weng2019} for steady Euler system to decompose the original system (\ref{2deuler-force}) into an equivalent system (\ref{def-curl}), where the hyperbolic quantity $B$ and elliptic quantities $u_1,u_2$ are effectively decoupled in subsonic regions. To this end, define the Bernoulli's function
\be\label{Bernou}
B=\frac{1}{2} |{\bf u}|^2 + h(\rho)-\Phi,
\ee
where $h(s)=\frac{\gamma}{\gamma-1}s^{\gamma-1}$ is the enthalpy function. Hence the density can be expressed by the Bernoulli function and velocity field as
\be\label{dens}
\rho=H(B,\Phi,|{\bf u}|^2)=\left[\frac{\gamma-1}{\gamma}(B+\Phi-\frac{1}{2}|{\bf u}|^2)\right]^{\frac{1}{\gamma-1}}.
\ee
Consequently, the 2-D Euler system (\ref{2deuler-force}) with unknown function $(u_1,u_2,P)$ is equivalent to the following system
\be\label{def-curl}
\begin{cases}
\displaystyle \sum_{i,j=1}^2(c^2(H)\delta_{ij}- u_i u_j) \p_i u_j + u_1\p_1 \Phi + u_2 \p_2 \Phi=0,\\
\p_1 u_2 -\p_2 u_1 =-\frac{\p_2 B}{u_1},\\
u_1 \p_1 B + u_2 \p_2 B=0,
\end{cases}
\ee
with unknown function $(u_1,u_2,B)$.
\par The shock curve is determined by
\be\label{shock0}
\xi'(x_2)= \frac{[\rho u_1 u_2]}{[\rho u_2^2+P]}(\xi(x_2),x_2),\quad x_2\in(-1,1).
\ee
Furthermore, it follows from the R-H conditions (\ref{RH-condi}) that
\be\label{rh2}
\begin{cases}
[\rho u_1]= \frac{[\rho u_2] [\rho u_1 u_2]}{[\rho u_2^2 +P]},\\
[\rho u_1^2+ P(\rho)]= \frac{([\rho u_1 u_2])^2}{[\rho u_2^2 +P]}.
\end{cases}
\ee
A direct computation by using (\ref{rh2}) shows that on $x_1=\xi(x_2)$
\be\label{expr_g1g2}
	&&(\rho^+(\xi(x_2),x_2)- \bar{\rho}^+(L_s),u_1^+(\xi(x_2),x_2)-
	\bar{u}^+(L_s))= \\
	&&(h_1,h_2)(\rho^-(\xi(x_2))-\bar\rho^-(L_s), u^-(\xi(x_2))-\bar
	u^-(L_s),(u_2^+(\xi(x_2),x_2))^2) \no
\ee
here $h_i(0,0,0)=0$ for $i=1,2$. In addition, we have
\begin{equation}\label{Der_g1}
	\begin{cases}
\begin{aligned}
\frac{\p h_1}{\p(\rho^--\bar\rho^-)}|_{(0,0,0)}&=2\bar u^-(L_s)\frac{\bar u^+-\bar
u^-}{(\bar u^+(L_s))^2-c^2(\bar\rho^+(L_s))}+1,\\
\frac{\p h_1}{\p(u^--\bar u^-)}|_{(0,0,0)}&=2 \bar\rho^-(L_s)\frac{\bar u^+-\bar
	u^-}{(\bar u^+(L_s))^2-c^2(\bar\rho^+(L_s))}, \\
\frac{\p
h_1}{\p(u_2^+)^2}|_{(0,0,0)}&=\frac{(\bar\rho^+(L_s)\bar
u^+(L_s))^2}{\bar P^+(L_s)-\bar P^-(L_s)}\frac{1}{(\bar
u^+(L_s))^2-c^2(\bar\rho^+(L_s))},
\end{aligned}
\end{cases}
\end{equation}
and
\begin{equation}\label{Der_g2}
	\begin{cases}
	\begin{aligned}
\frac{\p h_2}{\p(\rho^--\bar\rho^-)}|_{(0,0,0)}&=-(\gamma-1)\frac{\bar P^+(L_s)-\bar
P^-(L_s)}{(\bar\rho^+(L_s))^2\bar u^+(L_s)}\frac{\bar u^+\bar
	u^-}{(\bar u^+(L_s))^2-c^2(\bar\rho^+(L_s))},\\
\frac{\p h_2}{\p(u^--\bar u^-)}|_{(0,0,0)}&=\frac{2 \bar\rho^-(L_s)\bar
u^+(L_s)}{\bar\rho^+(L_s)}\frac{\bar u^-\bar u^+}{(\bar
u^+(L_s))^2-c^2(\bar\rho^+(L_s))}+\frac{\bar\rho^-(L_s)}{\bar\rho^+(L_s)}, \\
\frac{\p h_2}{\p(u_2^+)^2}|_{(0,0,0)}&=\frac{\bar\rho^+(L_s)\bar
	u^+(L_s)}{\bar P^+(L_s)-\bar P^-(L_s)}\frac{c^2(\bar\rho^+(L_s))}{(\bar
	u^+(L_s))^2-c^2(\bar\rho^+(L_s))}.
\end{aligned}
\end{cases}
\end{equation}
By substituting (\ref{expr_g1g2}) into (\ref{Bernou}), we conclude that there is a
function $h_3$ such that
\be\label{expr_g3}
&&B^+(\xi(x_2),x_2)- \bar{B}^+(L_s)= h_3(\rho^-(\xi(x_2))-\bar\rho^-(L_s), u^-(\xi(x_2))-\bar
u^-(L_s),(u_2^+(\xi(x_2),x_2))^2).
\ee
Thus, Theorem \ref{thm1} is established as long as we solve the problem (\ref{def-curl})-(\ref{shock0}) with boundary conditions \eqref{rh2}, \eqref{u2-BC}-\eqref{pres-condi}. In order to deal with the free boundary value problem (\ref{def-curl})-(\ref{shock0}), we introduce the following transformation to reduce it into a fixed boundary value problem. Setting
\be\label{chag-var-fix-bc}
y_1=\frac{x_1-\xi(x_2)}{L_1-\xi(x_2)}(L_1-L_s) +L_s,\ \ \ \ y_2=x_2,
\ee
then, the domain $\Omega^+=\{(x_1,x_2): \xi(x_2)<x_1<L_1, -1<x_2<1\}$ is changed into
\be\label{chag_inv_var}
Q=\{(y_1,y_2): L_s<y_1<L_1,\ -1<y_2<1\}.
\ee
The inverse change  variable gives
\be\label{chag_inve}
x_1=\xi(y_2)+\frac{L_1-\xi(y_2)}{L_1-L_s}(y_1-L_s)= y_1+\frac{L_1-y_1}{L_1-L_s}(\xi(y_2)-L_s),\ x_2=y_2.\no
\ee
We now set for $y\in Q$
\be\no
(\tilde u_j,\tilde\rho,\tilde B,\tilde\Phi)(y_1,y_2)=( u_j,\rho, B,\Phi)\left(\frac{L_1-\xi(y_2)}{L_1-L_s}(y_1-L_s)+\xi(y_2), y_2\right),\,\,j=1,2.
\ee
The shock equation (\ref{shock0}) becomes to
\begin{equation}\label{shock1}
\begin{aligned}
\xi'(y_2)&= \frac{(\rho  u_1  u_2)(\xi(y_2),y_2)}{P^+(\rho)(\xi(y_2),y_2)-P^-(\xi(y_2))+\rho (u_2)^2(\xi(y_2),y_2)},\\
&=\frac{(\tilde\rho  \tilde u_1  \tilde
u_2)(L_s,y_2)}{P^+(\tilde\rho)(L_s,y_2)-P^-(\xi(y_2))+\tilde\rho (\tilde
u_2)^2(L_s,y_2)},\, y_2\in(-1,1),
\end{aligned}
\end{equation}
and the system (\ref{def-curl}) is changed into
\be\label{tilde_v1v2_fin}
&&(c^2(\tilde\rho)-\tilde u_1^2)\frac{L_1-L_s}{L_1-\xi(y_2)} \p_{y_1}\tilde
u_1+c^2(\tilde\rho)\p_{y_2}\tilde u_2+\tilde u_1\frac{L_1-L_s}{L_1-\xi(y_2)}
\p_{y_1}\tilde \Phi =F_1(\tilde{{\bf u}}, \tilde{B}),\\\nonumber
&&\frac{L_1-L_s}{L_1-\xi(y_2)} \p_{y_1}\tilde u_2-\p_{y_2}\tilde u_1-\frac{y_1-L_1}{L_1-\xi(y_2)}\xi'(y_2)\p_{y_1}\tilde u_1+
\frac{\p_{y_2}\tilde B}{\tilde u_1}=F_2(\tilde \u,\tilde B),\\\label{B_eq}
&&\tilde u_1\frac{L_1-L_s}{L_1-\xi(y_2)} \p_{y_1}\tilde B+\tilde u_2\p_{y_2}\tilde
B+\frac{y_1-L_1}{L_1-\xi(y_2)}\xi'(y_2)\tilde u_2\p_{y_1}\tilde B=0,
\ee
where
\be\no
&&F_1(\tilde{{\bf u}}, \tilde{B})=
\tilde u_2^2\p_{y_2}\tilde u_2-(c^2(\tilde\rho)-\tilde
u_2^2)\frac{y_1-L_1}{L_1-\xi(y_2)}\xi'(y_2)\p_{y_1}\tilde u_2
+\tilde u_1\tilde u_2\frac{L_1-L_s}{L_1-\xi(y_2)} \p_{y_1}\tilde u_2\\ \no
&&+\tilde u_2\tilde u_1(\p_{y_2}\tilde u_1+\frac{y_1-L_1}{L_1-\xi(y_2)}\xi'(y_2)\p_{y_1}\tilde u_1)
-\tilde u_2(\p_{y_2}\tilde \Phi+\frac{y_1-L_1}{L_1-\xi(y_2)}\xi'(y_2)\p_{y_1}\tilde \Phi),\\
&&F_2(\tilde{{\bf u}}, \tilde{B})=-{\frac{y_1-L_1}{L_1-\xi(y_2)}\frac{\xi'(y_2)}{\tilde u_1} \p_{y_1}\tilde B}.\no
\ee
Consider the perturbed functions $v_i(y_1,y_2)$, $i=1,2,3,4,$ as
\be\no
&&v_1(y_1,y_2)=\tilde{u}_1(y_1,y_2)-\bar{u}^+(y_1),\ \ v_2(y_1,y_2)= \tilde{u}_2(y_1,y_2),\\\no
&&v_3(y_1,y_2)=\tilde{B}(y_1,y_2)-\bar{B}^+,\ \ \ v_4(y_2)=\xi(y_2)-L_s,
\ee
and define the vector functions
\be
V(y_1,y_2)=(v_1(y_1,y_2),v_2(y_1,y_2),v_3(y_1,y_2),v_4(y_2)).
\ee
It follows from (\ref{shock1}) that the shock satisfies
\be\label{shock2}
v_4'(y_2)=\frac{\tilde\rho (\bar{u} +v_1) v_2(L_s, y_2)}{P^+(\tilde\rho)(L_s,y_2)-P^-(\xi(y_2))+\tilde\rho (\tilde u_2)^2(L_s,y_2)}.
\ee
Through a direct computation, one can derive from  (\ref{expr_g3}) and (\ref{B_eq})
that the Bernoulli function satisfy a transport type equation
\be\label{bernou1}
\begin{cases}
[(\bar u^+ + v_1)(L_1-L_s)+v_2(y_1-L_1)v_4'(y_2)]\p_{y_1} v_3+v_2(L_1- v_4-L_s)\p_{y_2} v_3=0,\\
 v_3(L_s,y_2)= b_3 v_4(y_2) + R_3(y_2),
\end{cases}
\ee
where
\be\label{b3}
b_3=\frac{\bar\rho^-(L_s)-\bar\rho^+(L_s)}{\bar\rho^+(L_s)}\bar f(L_s),
\ee
and $R_3(y_2)=R_3(V(L_s,y_2))=O(|V(L_s,y_2)|)^2$ is an error term of second order.
 We may readily drop  superscribe $+$ on the background solutions if there is no
 risk of confusing. And the first order system for $v_1, v_2$ is given by,
\be\label{v1v2}
\begin{cases}
  (c^2(\bar\rho^+)-(\bar u^+)^2)\p_{y_1} v_1+c^2(\bar\rho^+)\p_{y_2}v_2+B_1(y_1)v_1\\
  \,\,\quad\quad\quad+B_3(y_1)v_3+B_4(y_1)v_4(y_2)=F_3(V,\nabla V),\\
  \p_{y_1}v_2-\p_{y_2}v_1+\frac{L_1-y_1}{L_1-L_s}\bar u'v_4'+\frac{\p_{y_2}v_3}{\bar
  u}=F_4(V,\nabla V),
\end{cases}
\ee
where $F_3, F_4$ represent the remainder term of second order with respect to $V$
and $\nabla V$, and
\be\no
&&B_1(y_1)=\bar f(y_1)-(\gamma+1)\bar u\bar u'=\frac{\gamma \bar
u^2+c^2(\bar\rho^+)}{c^2(\bar\rho^+)-\bar u^2}\bar f>0,\,\\ \no
&&B_3(y_1)=(\gamma-1)\bar u',\,\\ \no
&&B_4(y_1)=\frac{1}{L_1-L_s}[(\gamma-1)\bar f(y_1)(L_1-y_1)\bar u'-\bar u \bar
f+\bar u\bar f'(y_1)(L_1-y_1)].
\ee
It's obvious that the second formula in (\ref{expr_g1g2}) gives the boundary
condition of $v_1$ on the entrance $x_1=L_s$. Meanwhile, the formula (\ref{dens})
after changing variable becomes
\begin{equation}
	\begin{aligned}
		c^2(\tilde\rho)(y)&=\gamma\tilde\rho^{\gamma-1}=(\gamma-1)(\tilde
		B-\frac{1}{2}|\tilde{\bf u}|^2+\bar \Phi)\\
		&=(\gamma-1)(\bar B^++v_3-\frac{1}{2}(\bar
		u^++v_1)^2-\frac{1}{2}v_2^2+\bar \Phi)\\
		&=c^2(\bar\rho^+)+(\gamma-1)(v_3-\bar
		u^+v_1-\frac{1}{2}v_1^2-\frac{1}{2}v_2^2),
	\end{aligned}
\end{equation}
which together with (\ref{pres-condi}) gives the boundary condition of $v_1$ on the
exit $x_1=L_1$. Hence, the boundary conditions to the system (\ref{v1v2}) read as
follow
\be\label{v1v2_BC}
\begin{cases}
  v_1(L_s,y_2)=b_2v_4(y_2)+R_2(y_2),\\
  v_1(L_1,y_2)=\frac{1}{\bar u(L_1)}(v_3(L_1,y_2)-\epsilon \hat P_{ex}(y_2))+R_4(y_2),\\
v_2(y_1,\pm 1)=0,
\end{cases}
\ee
here
\be\label{b2}
b_2=\frac{\bar{\rho}^-(L_s)\bar{u}^+(L_s)}{\bar{\rho}^+(L_s)
	[(\bar{u}^+(L_s))^2-c^2(\bar{\rho}^+(L_s))]}\bar{f}(L_s)<0,
\ee
and
$R_2(y_2)=R_2(V(L_s,y_2))=O(|V(L_s,y_2)|)^2$,\,$R_4(y_2)=R_4(V(L_1,y_2))=O(|V(L_1,y_2)|)^2$
are error terms of second order.
Based on our reformulation, Theorem \ref{thm1} follows from the following results.
\begin{theorem}\label{thm3.1}
Under the same assumptions as in Theorem \ref{thm1} there exists a positive constant
$\epsilon_0>0$ such that for each $\epsilon \in (0,\epsilon_0]$, the system
(\ref{shock2})-(\ref{v1v2_BC}) has a unique solution $V\in (C^{2,\alpha}(\bar{Q}))^3\times C^{3,\alpha}([-1,1])$ satisfying the following
estimate
\be\label{est_th3.1}
\sum_{i=1}^{3}\| v_i\|_{C^{2,\al}(\bar Q)}+\| v_4\|_{C^{3,\al}([-1,1])}\leq C\epsilon,
\ee
where the constant $C$ depends only on the background solution, the exit pressure and $\al\in (0,1)$.
\end{theorem}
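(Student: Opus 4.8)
The plan is to solve the nonlinear coupled system \eqref{shock2}--\eqref{v1v2_BC} by a fixed-point iteration on a small ball
\[ \mathcal{X}_\delta = \Big\{ V=(v_1,v_2,v_3,v_4) : \sum_{i=1}^3 \|v_i\|_{C^{2,\alpha}(\bar Q)} + \|v_4\|_{C^{3,\alpha}([-1,1])} \le \delta\epsilon \Big\}, \]
with $\delta$ a large constant fixed later and $\epsilon$ small. Given $V^\ast\in\mathcal{X}_\delta$, I would freeze the quadratic remainders $F_3,F_4,R_2,R_3,R_4$ and the variable coefficients of the transport operator at $V^\ast$, and then solve the three linearized subproblems in a definite order: (i) the linear transport equation \eqref{bernou1} for $v_3$ along the characteristics generated by $V^\ast$, with shock data $b_3 v_4^\ast+R_3$ (recall $b_3$ from \eqref{b3}); (ii) the linear first-order elliptic system \eqref{v1v2} for $(v_1,v_2)$ with boundary data \eqref{v1v2_BC} assembled from the freshly computed $v_3$ and from $v_4^\ast$; (iii) the shock ODE \eqref{shock2}, integrated in $y_2$ to update $v_4$ from the new traces of $(v_1,v_2)$ at $y_1=L_s$. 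A fixed point of the map $V^\ast\mapsto V$ is exactly a solution of \eqref{shock2}--\eqref{v1v2_BC}.

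The heart of the argument is the well-posedness and Schauder estimate for the linear elliptic problem in step (ii), which is where the deformation--curl reformulation pays off and where the nonlocality noted in the introduction appears. Since $c^2(\bar\rho^+)-(\bar u^+)^2>0$ throughout the subsonic region, the principal part of \eqref{v1v2} is a uniformly elliptic Cauchy--Riemann-type system. I would introduce a potential function $\phi$ so as to recast this first-order system as a single second-order uniformly elliptic equation for $\phi$ (solving first an auxiliary Poisson problem for the prescribed curl coming from the second line of \eqref{v1v2}). Under this substitution the prescribed-$v_1$ conditions on $y_1=L_s,L_1$ and the wall condition $v_2(y_1,\pm1)=0$ become boundary conditions for $\phi$; crucially, the exit condition $v_1(L_1,\cdot)=\frac{1}{\bar u(L_1)}(v_3(L_1,\cdot)-\epsilon\hat P_{ex})+R_4$, which is nonlocal because $v_3(L_1,\cdot)$ must be traced back along characteristics to the shock data $b_3 v_4$ (this is the linearized condition that becomes nonlocal in the $(v_1,v_2)$ variables), turns into a local boundary condition on $\phi$ (cf. \eqref{phi_BC}). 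Standard Schauder theory then gives $\phi\in C^{3,\alpha}$ and hence $(v_1,v_2)\in C^{2,\alpha}(\bar Q)$, provided the corner compatibility conditions at $(L_s,\pm1)$ and $(L_1,\pm1)$ hold; the hypothesis $\hat P_{ex}'(\pm1)=0$ in \eqref{comp-pressure} is precisely what guarantees compatibility at the exit corners.

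For the remaining two steps I would use more elementary estimates. The transport equation \eqref{bernou1} is solved by integrating along characteristics; since its coefficients are $O(\epsilon)$ perturbations of $\bar u^+(y_1)\partial_{y_1}$ and the inflow data at $y_1=L_s$ is $b_3 v_4+R_3\in C^{2,\alpha}$, one gets $\|v_3\|_{C^{2,\alpha}(\bar Q)}\le C(\|v_4\|_{C^{2,\alpha}}+\epsilon^2)$. The shock relation \eqref{shock2} is a first-order ODE in $y_2$ whose right-hand side is a smooth function of the $C^{2,\alpha}$ traces of $(v_1,v_2,\tilde\rho)$ at $L_s$; integrating it \emph{gains} one derivative and yields $v_4\in C^{3,\alpha}([-1,1])$ with $\|v_4\|_{C^{3,\alpha}}\le C(\epsilon+\delta^2\epsilon^2)$, the linear-in-$\epsilon$ part coming from the $\epsilon\hat P_{ex}$ forcing that propagates through the boundary data and the sign $b_2<0$ in \eqref{b2}. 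Chaining the three estimates and using that the remainders are genuinely quadratic, I would show that for $\delta$ fixed large and $\epsilon_0$ small the map sends $\mathcal{X}_\delta$ into itself. A parallel estimate on the difference of the images of two elements, performed in the weaker norm $\sum_{i=1}^3\|v_i\|_{C^{1,\alpha}}+\|v_4\|_{C^{2,\alpha}}$ to avoid derivative loss, gives a contraction and hence a unique fixed point, which is the asserted solution; uniqueness in the full class \eqref{est_th3.1} follows from the same difference estimate.

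The step I expect to be the main obstacle is the elliptic well-posedness in step (ii): making the potential-function reduction precise so that the characteristic-dependent exit condition genuinely localizes, and handling the resulting solvability structure. Because the reduced problem for $\phi$ carries Neumann-type data on all four sides, it is solvable only under an integral compatibility condition; since the shock ODE \eqref{shock2} determines $v_4$ only up to an additive constant (the mean shock shift), I expect that free constant to be pinned down precisely by this compatibility condition, closing the loop between the shock position and the elliptic problem. The second, pervasive difficulty is the regularity bookkeeping: the shock $v_4$ must live in $C^{3,\alpha}$ while the flow is only $C^{2,\alpha}$, so the order of the subproblems and the one-derivative gain in \eqref{shock2} must be arranged to close the estimates uniformly up to the four corners of $Q$ without any loss.
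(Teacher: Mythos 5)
Your overall scaffolding (fixed point on a small $C^{2,\alpha}\times C^{3,\alpha}$ ball, characteristics for $v_3$, a potential-function reduction for $(v_1,v_2)$, the one-derivative gain in the shock ODE giving $v_4\in C^{3,\alpha}$, and contraction in the weaker norm $(C^{1,\alpha})^3\times C^{2,\alpha}$) matches the paper. The genuine gap is in your step ordering: you lag the shock, feeding $v_4^\ast$ into the boundary data of the elliptic problem and only afterwards updating $v_4$ from the new traces. But the shock--flow coupling coefficients $b_2$, $b_3$ in the boundary data and $b_0$ in the shock ODE are $O(1)$ constants of the background solution, not $O(\epsilon)$, so the composite map $v_4^\ast\mapsto(v_1,v_2)\mapsto v_4$ has an $O(1)$ Lipschitz constant and your difference estimate cannot close; only the genuinely quadratic remainders are small. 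The paper avoids this by keeping the new shock unknowns \emph{inside} the linear solve: it writes $v_4(y_2)=v_4(-1)+\int_{-1}^{y_2}b_0\,v_2(L_s,\tau)\,d\tau+(\text{quadratic})$ and substitutes this into (\ref{v1v2})--(\ref{v1v2_BC}), so the linearized problem (\ref{v1v2-lin})--(\ref{v1v2_BC_lin}) is a \emph{nonlocal} elliptic system for $(v_1,v_2)$ with an unknown constant $v_4(-1)$, solved exactly in one shot (Proposition \ref{uniq_exis}); only second-order terms are frozen at $\hat V$. That is precisely what makes the Lipschitz constant of the iteration map $O(\epsilon)$ in (\ref{est_diff_fin}).

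Your anticipated mechanism for pinning the free constant is also not the right one. After the potential substitution, the problem (\ref{phi_eq})--(\ref{phi_BC}) is \emph{not} a pure Neumann problem: at $y_1=L_s$ the condition is of Robin type, $\p_{y_1}\phi-a_3(\ka+\phi(L_s,y_2))=g_1$ with $a_3>0$, and the equation carries the zeroth-order nonlocal term $-a_0(y_1)(\ka+\phi(L_s,y_2))$ with $a_0>0$; both signs come from $\bar f>0$, i.e.\ exactly the stabilizing effect of the external force. Consequently there is no Fredholm integral compatibility condition to exploit: existence comes from Lax--Milgram plus the Fredholm alternative (Lemma \ref{sol_inhomo}), and the constant $\ka=v_4(-1)$ is determined by the point normalization $\phi(L_s,-1)=0$ together with a maximum-principle uniqueness argument for the homogeneous problem (Step 2 of Proposition \ref{uniq_exis}, following Lemma 4.1 of \cite{LXY2009b}). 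Two further corrections in the same spirit: no auxiliary Poisson problem for the curl is needed, since the second equation of (\ref{v1v2-lin}) is arranged in exact form so that the potential exists directly; and the localization of the exit condition is not automatic but hinges on the specific choice of $\la(y_1)$ with $\la(L_1)=b_3/\bar u(L_1)$, which cancels the shock-trace term at $y_1=L_1$ and renders (\ref{phi_BC}) local there. Without the coupled nonlocal linear solve and the sign structure of $a_0,a_3$, your scheme yields neither a contraction nor a determination of the shock position. Finally, the corner regularity you invoke is obtained in the paper by even reflection across $y_2=\pm1$ (using the compatibility conditions built into $\V_\delta$ and $\hat P_{ex}'(\pm1)=0$), which is consistent with, and makes precise, your compatibility remark.
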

\section{iteration scheme and  the linear system}
In the first part of this section, we construct an iteration scheme for the
nonlinear system, and the
problem is reduced to the solvability of corresponding linear systems. Indeed, it
turns out that the linear system is a non-local elliptic equation of second order
with a free parameter denoting the relative location of the shock position on the
wall $x_2=-1$. Then we study the existence, uniqueness and regularity for this linear
system in the remainder part of this section.
\subsection{Iteration Scheme}\label{iteration}
\par Inspired by \cite{LXY2009b}, we will develop an iteration scheme to prove Theroem \ref{thm3.1}. Consider the Banach space
\be\label{itr-spac}
\V_{\delta}:=\{V:\displaystyle\sum_{i=1}^{3}\|v_i\|_{C^{2,\al}(\bar Q)}+|v_4|_{C^{2,\al}[-1,1]}\leq \delta;\p_{y_2}v_{j}(y_1,\pm 1)=0,\\
j=1,3;v_2(y_1,\pm 1)=\p_{y_2}^2v_2(y_1,\pm 1)=0; v_4'(\pm 1)=v_4^{(3)}(\pm 1)=0\},\no
\ee
here $\delta>0$ is a small constant to be determined later. For a fix $\hat V\in \V_{\delta}$, equivalently, we have the following quantity
\be\no
(\hat v_1,\hat v_2,\hat B, \hat\rho, \hat P,\hat\xi)(y).
\ee
We now define the linearized scheme to the problem (\ref{shock2})-(\ref{v1v2_BC}) as
follows.
\par Firstly, thanks to (\ref{shock2}), $v_4$ is determined by
\be\label{shock3}
v_4'(y_2)=b_0 v_2(L_s,y_2) +F_5(\hat V)(L_s,y_2),
\ee
where
\be\no
&&b_0=\frac{(\bar{\rho} \bar{u})(L_s)}{P^+(\bar\rho)(L_s)-P^-(L_s)}>0,\\ \no
&&F_5(y_2)=\{\frac{\hat\rho \hat u_1}{P^+(\hat\rho)(L_s,y_2)-P^-(\hat\xi(y_2))+\hat\rho (\hat u_2)^2(L_s,y_2)} -b_0\}v_2(L_s,y_2), \no
\ee
hence, one can express the shock as
\be\label{shoc_expr}
v_4(y_2)=v_4(-1)+\int_{-1}^{y_2}b_0v_2(L_s,\tau) d\tau + R_5(y_2),
\ee
where  $R_5(y_2)=\int_{-1}^{y_2}F_5(\tau) d\tau$ is a error term of second order. Due to $\hat V\in\V_{\delta}$, we have
\be\label{err_F5}
F_5(\pm 1)=F_5^{''}(\pm 1)=0,\,\|F_5\|_{C^{k,\al}[-1,1]}\leq C\delta\|\hat v_2\|_{C^{k,\al}(\bar Q)},\,k=0,1,2.
\ee
\par Secondly, Using (\ref{bernou1}), we get the linearized transport equation for $v_3$:
\be\no
[(\bar u + \hat v_1)(L_1-L_s)+\hat v_2(y_1-L_1)\hat v_4'(y_2)]\p_{y_1} v_3+\hat v_2(L_1- \hat v_4-L_s)\p_{y_2} v_3=0\,\,\text{in}\,Q,
\ee
with initial data
\be\no
v_3(L_s,y_2)= b_3 v_4(y_2) + R_3(y_2).\no
\ee
Thus, it can be solved by characteristic methods. Let $y_2(s;\beta)$ be the characteristics going through $(y_1, y_2)$ with $y_2(L_s)=\beta$, i.e.
\be\label{eq_char}
\begin{cases}
\frac{d y_2}{ds}(s;\beta)=\frac{\hat v_2(L_1- \hat v_4-L_s)}{(\bar u + \hat v_1)(L_1-L_s)+\hat v_2(y_1-L_1)\hat v_4'(y_2)},\ \ L_s<s<L_1,\\
y_2(L_s)=\beta.
\end{cases}
\ee
It is noted that $\beta$ can be also regarded as a function of $y=(y_1,y_2)$, this is denoted by $\beta=\beta(y)$, which leads to
\be\label{v3_expr}
v_3(y_1,y_2)=v_3(L_s,\beta(y))=b_3 v_4(y_2)+ F_6(y),
\ee
where
\be\no
F_6(y)=b_3\int_{y_2}^{\beta(y)} v_4'(\tau) d\tau +R_3(\hat V(L_s,\beta(y)))
\ee
is an error term of second order. Furthermore, we have
\be\label{err_F6}
&&\p_{y_2}F_6(y_1,\pm 1)=0,\\
&&\|F_6\|_{C^{k,\al}(\bar Q)}\leq C\delta (\sum_{i=1}^{3}\|\hat v_i\|_{C^{k,\al}(\bar Q)}+\|\hat v_4\|_{C^{k+1,\al}(\bar Q)}),\,k=0,1,2.\no
\ee
\par It remains to determine the velocity $v_1,v_2$ and the shock position difference $v_4(-1)$ on the wall $x_2=-1$. Substituting (\ref{shoc_expr}) and (\ref{v3_expr}) into (\ref{v1v2}) and (\ref{v1v2_BC}), we get the following linearized system for $v_1,v_2$ with a unknown parameter $v_4(-1)$:
\be\label{v1v2-lin}
\begin{cases}
  \p_{y_1} v_1+\frac{1}{1-\bar M^2}\p_{y_2}v_2+\frac{B_1(y_1)}{c^2(\bar\rho^+)-(\bar u^+)^2}v_1\\
  \quad\quad+\frac{B_3(y_1)b_3+B_4(y_1)}{c^2(\bar\rho^+)-(\bar u^+)^2} (v_4(-1)+\int_{-1}^{y_2}b_0v_2(L_s,\tau) d\tau)
  =G_1(y),\\
  \p_{y_1}v_2-\p_{y_2}[v_1-\la(y_1) (v_4(-1)+\int_{-1}^{y_2}b_0v_2(L_s,\tau) d\tau)] =G_2(y),
\end{cases}
\ee
and the boundary conditions
\be\label{v1v2_BC_lin}
\begin{cases}
  v_1(L_s,y_2)=b_2(v_4(-1)+\int_{-1}^{y_2}b_0v_2(L_s,\tau) d\tau)+R_6(y_2),\\
  v_1(L_1,y_2)=\frac{b_3}{\bar u(L_1)}(v_4(-1)+\int_{-1}^{y_2}b_0v_2(L_s,\tau) d\tau)- \frac{\epsilon \hat P_{ex}(y_2)}{\bar u(L_1)}+R_7(y_2),\\
v_2(y_1,\pm 1)=0,
\end{cases}
\ee
where
\be\no
&&\la(y_1)=\frac{L_1-y_1}{L_1-L_s}\bar u'+\frac{b_3}{\bar u},\\ \no
&&R_6(y_2)=b_2R_5(y_2)+R_2(y_2),\\ \no
&&R_7(y_2)=\frac{b_3R_5(y_2)+F_6(L_1,y_2)}{\bar u(L_1)}+R_4(y_2),\\ \no
&&G_1(y)=\frac{F_3(\hat V,\nabla \hat V)-B_3(y_1)R_5(y_2)-B_3(y_1)F_6(y)-B_4(y_1)R_5(y_2)}{(c^2(\bar\rho^+)-(\bar u^+)^2)},\\ \no
&&G_2(y)=F_4(\hat V,\nabla \hat V)-\frac{\p_{y_2}F_6(y)}{\bar u}+\la(y_1)\p_{y_2}R_5(y_2), \no
\ee
are error terms of second order. It follows from (\ref{err_F5}),(\ref{err_F6}) and a simple calculation that
\be\label{err_G}
&&\p_{y_2}G_1(y_1,\pm 1)=0,\,G_2(y_1,\pm 1)=0, \\
&&\|G_i\|_{C^{k-1,\al}(\bar Q)}\leq C\delta \|\hat V\|_{C^{k,\al}(\bar Q)},\,i,k=1,2,\no
\ee
and
\be\label{err_R}
&&R_6'(\pm 1)=0,\,R_7'(\pm 1)=0,\\
&&\|(R_6,R_7)\|_{C^{k,\al}[-1,1]}\leq C\delta\|\hat V\|_{C^{k,\al}(\bar Q)},\,k=0,1,2.\no
\ee
\par The second equation in (\ref{v1v2-lin}) implies that there is a potential function $\phi(y)$ satisfy
\be\label{phi}
\begin{cases}
  \p_{y_2}\phi=v_2,\,\,\phi(L_s,-1)=0,\\
  \p_{y_1}\phi=v_1-\la(y_1) (v_4(-1)+\int_{-1}^{y_2}b_0v_2(L_s,\tau) d\tau)+\int_{-1}^{y_2}G_2(y_1,\tau)d\tau,
\end{cases}
\ee
it follows that $v_1,v_2$ can be represented by
\be\label{rprstv1v2}
\begin{cases}
  v_2=\p_{y_2}\phi,\\
  v_1=\p_{y_1}\phi+\la(y_1)[v_4(-1)+b_0\phi(L_s,y_2)]-\int_{-1}^{y_2}G_2(y_1,\tau)d\tau.
\end{cases}
\ee
Substituting (\ref{rprstv1v2}) into the first equation in  (\ref{v1v2-lin}),  we
conclude that $\phi$ satisfying the following non-local elliptic equation of second
with an unknown constat $v_4(-1)$
\be\label{phi_eq}
\p_{y_1}^{2}\phi+\frac{1}{1-\bar M^2}\p_{y_2}^{2}\phi+\la_{1}(y_1)\p_{y_1}\phi+\la_{0}(y_1)b_0(\frac{v_4(-1)}{b_0}+\phi(L_s,y_2))\\ \no
=G_1(y)+\la_1(y_1)\int_{-1}^{y_2}G_2(y_1,\tau)d\tau+\p_{y_1}\int_{-1}^{y_2}G_2(y_1,\tau)d\tau,\no
\ee
where
\be\no
&&\la_{1}(y_1)=\frac{B_1(y_1)}{c^2(\bar\rho)-\bar u^2}>0,\\ \no
&&\la_{0}(y_1)=\frac{1}{c^2(\bar\rho)-\bar u^2}[(c^2(\bar\rho)-\bar u^2)\la'+B_1\la+B_3b_3+B_4]. \no
\ee
Similarly, substituting (\ref{rprstv1v2}) into  the boundary conditions
(\ref{v1v2_BC_lin}), combine with the boundary condition of $\phi$ in (\ref{phi}),
we have
\be\label{phi_BC}
\begin{cases}
  \p_{y_1}\phi(L_s,y_2)=b_0(b_2-\la(L_s))(\frac{v_4(-1)}{b_0}+\phi(L_s,y_2))+R_6(y_2)+\int_{-1}^{y_2}G_2(L_s,\tau)d\tau,\\
  \p_{y_1}\phi(L_1,y_2)=- \frac{\epsilon \hat P_{ex}(y_2)}{\bar u(L_1)}+R_7(y_2)+\int_{-1}^{y_2}G_2(L_1,\tau)d\tau,\\
\p_{y_2}\phi(y_1,\pm 1)=\phi(L_s,-1)=0.
\end{cases}
\ee

So far, we have reduced the problem (\ref{v1v2-lin})-(\ref{v1v2_BC_lin}) into a
non-local elliptic equation of $\phi$ with an unknown constant $v_4(-1)$. Hence, it
is sufficient to establish the solvability and regularity results for the problem
(\ref{phi_eq})-(\ref{phi_BC}) in order to study the original problem. We are going
to do it in the next subsection.
\subsection{A non-local elliptic equation with a free constant}
In this section, we prove the existence, uniqueness and regularity results for the
problem (\ref{phi_eq}). To this end, we consider the following more concise form of
second order elliptic system with an unknown constant $\ka$
\be\label{gen_2_ellip}
\begin{cases}
\p_{y_1}^2\phi+a_2(y_1)\p_{y_2}^2\phi+a_1(y_1)\p_{y_1}\phi-a_0(y_1)(\ka+\phi(L_s,y_2))=\p_{y_1}f,\,\text{in}\,Q,\\
\p_{y_1}\phi(L_s,y_2)-a_3(\ka+\phi(L_s,y_2))=g_1(y_2),\\
\p_{y_1}\phi(L_1,y_2)=g_2(y_2),\\
\p_{y_2}\phi(y_1,\pm 1)=\phi(L_s,-1)=0,\\
\end{cases}
\ee
where the smooth coefficients $a_i(y_1)$, i=0,1,2  and the constant $a_3$ satisfy
\be\label{sol_condi}
a_1(y_1)<C_1,\,0<C_0<a_i(y)<C_1,,i=0,2,3,\,
\ee
and the parameter $\ka$ is a constant to be determined with the solution itself.
\par The first lemma  implies that the inhomogeneous problem corresponding to the
system
(\ref{gen_2_ellip}) without the unknown constant
has a unique weak solution.
\begin{lemma}\label{sol_inhomo}
	Suppose that $f\in L^2(Q)$ and $g_i\in L^2(-1,1), i=1,2$, then, there exists a
	suitable large positive constant $K$, such that the  following inhomogeneous
	second order elliptic equation
	\be\label{gen_2_ellip_inhom}
	\begin{cases}
		-\p_{y_1}^2\phi-a_2(y_1)\p_{y_2}^2\phi-a_1(y_1)\p_{y_1}\phi+K\phi+a_0(y_1)\phi(L_s,y_2)=\p_{y_1}f,\,\text{in}\,Q,\\
		\p_{y_1}\phi(L_s,y_2)-a_3\phi(L_s,y_2)=g_1(y_2),\\
		\p_{y_1}\phi(L_1,y_2)=g_2(y_2),\\
		\p_{y_2}\phi(y_1,\pm 1)=0,\\
	\end{cases}
	\ee
	admits a unique weak solution $\phi\in H^1(Q)$ satisfying
	\be\label{H1_est_inhom}
	\|\phi\|_{H^1(Q)}\leq C(\|(g_1,g_2)\|_{L^2(-1,1)}+\|f\|_{L^2(Q)}),
	\ee
	for some positive constant $C>0$.
\end{lemma}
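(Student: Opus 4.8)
The plan is to prove existence and uniqueness of a weak solution to the inhomogeneous problem \eqref{gen_2_ellip_inhom} via the Lax--Milgram theorem, treating the nonlocal term $a_0(y_1)\phi(L_s,y_2)$ as a lower-order perturbation that can be absorbed once the zeroth-order coefficient $K$ is chosen large enough. First I would fix the correct weak formulation: multiply the equation by a test function $\psi\in H^1(Q)$, integrate over $Q$, and integrate the second-order terms by parts. The term $-\p_{y_1}^2\phi$ produces a boundary contribution on $\{y_1=L_s\}\cup\{y_1=L_1\}$; I would use the two Neumann-type boundary conditions to substitute $\p_{y_1}\phi=a_3\phi(L_s,\cdot)+g_1$ on the left face and $\p_{y_1}\phi=g_2$ on the right face. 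The term $-a_2\p_{y_2}^2\phi$ produces a boundary contribution on $\{y_2=\pm1\}$ which vanishes thanks to $\p_{y_2}\phi(y_1,\pm1)=0$; note $a_2=a_2(y_1)$ is independent of $y_2$, so no extra commutator term arises when integrating by parts in $y_2$. This yields a bilinear form
\[
a(\phi,\psi)=\int_Q\big(\p_{y_1}\phi\,\p_{y_1}\psi+a_2\,\p_{y_2}\phi\,\p_{y_2}\psi-a_1\,\p_{y_1}\phi\,\psi+K\phi\psi\big)\,dy
\]
\[
+\int_Q a_0(y_1)\,\phi(L_s,y_2)\,\psi\,dy-\int_{-1}^1 a_3\,\phi(L_s,y_2)\,\psi(L_s,y_2)\,dy_2,
\]
and a linear functional encoding $f$, $g_1$, $g_2$.

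Next I would verify the two Lax--Milgram hypotheses. Continuity of $a(\cdot,\cdot)$ on $H^1(Q)$ is routine from \eqref{sol_condi}, using the trace theorem to control the boundary integrals and the slice term $\phi(L_s,\cdot)$; here the trace inequality $\|\phi(L_s,\cdot)\|_{L^2(-1,1)}\le C\|\phi\|_{H^1(Q)}$ is the key tool, together with the continuity of the linear functional, which again follows from trace estimates applied to the $g_1,g_2$ terms. The substantive step is coercivity. The principal part $\int_Q(|\p_{y_1}\phi|^2+a_2|\p_{y_2}\phi|^2)$ is bounded below by $C_0\int_Q|\nabla\phi|^2$ since $a_2>C_0>0$. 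The troublesome terms are the first-order term $-a_1\p_{y_1}\phi\,\phi$, the nonlocal term, and the boundary slice term, each of which I would dominate by Cauchy--Schwarz/Young's inequality and the trace inequality, paying a factor of $\|\phi\|_{L^2(Q)}^2$ (respectively $\|\phi(L_s,\cdot)\|_{L^2}^2\le \veps\|\nabla\phi\|_{L^2(Q)}^2+C_\veps\|\phi\|_{L^2(Q)}^2$). Choosing $\veps$ small to reabsorb the gradient loss and then $K$ large enough to dominate the resulting $\|\phi\|_{L^2(Q)}^2$ contributions gives
\[
a(\phi,\phi)\ge \tfrac{C_0}{2}\|\nabla\phi\|_{L^2(Q)}^2+\tfrac{K}{2}\|\phi\|_{L^2(Q)}^2\ge c\,\|\phi\|_{H^1(Q)}^2.
\]
Lax--Milgram then yields a unique $\phi\in H^1(Q)$, and testing with $\psi=\phi$ together with the continuity of the right-hand side produces the estimate \eqref{H1_est_inhom}.

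The main obstacle, and the reason the hypothesis on $K$ being \emph{suitably large} is needed, is precisely the interplay between the nonlocal slice term $a_0(y_1)\phi(L_s,y_2)$ and the boundary term $a_3\phi(L_s,y_2)$, neither of which is sign-definite and both of which involve only the trace of $\phi$ on $\{y_1=L_s\}$ rather than $\phi$ itself. One must be careful that the trace on the full slice, not a single point, is being controlled; I would use the trace/interpolation inequality in the $y_1$-direction to write $\|\phi(L_s,\cdot)\|_{L^2(-1,1)}^2\le \veps\|\p_{y_1}\phi\|_{L^2(Q)}^2+C_\veps\|\phi\|_{L^2(Q)}^2$, which is exactly what lets the large-$K$ mechanism succeed. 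Everything else is standard elliptic machinery; no higher regularity is claimed in this lemma, so I would not invoke Schauder or difference-quotient arguments here, reserving those for the subsequent regularity statements.
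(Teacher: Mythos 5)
Your argument is correct in substance and is essentially the paper's own proof: a weak formulation obtained by integrating by parts and substituting the Robin/Neumann conditions, Lax--Milgram after choosing $K$ suitably large, and the trace/interpolation inequality $\|\phi(L_s,\cdot)\|_{L^2(-1,1)}^2\le \veps\|\p_{y_1}\phi\|_{L^2(Q)}^2+C_\veps\|\phi\|_{L^2(Q)}^2$ together with Young's inequality to absorb the first-order term and the nonlocal slice term $a_0(y_1)\phi(L_s,y_2)$ into the coercivity estimate, exactly as in the paper's treatment with its constant $C_{tr}$. One sign slip worth fixing: since the outward normal at $y_1=L_s$ is $-1$, substituting $\p_{y_1}\phi(L_s,\cdot)=a_3\phi(L_s,\cdot)+g_1$ produces $+a_3\int_{-1}^{1}\phi(L_s,y_2)\psi(L_s,y_2)\,dy_2$ in the bilinear form (the paper then uses $a_3>0$ to drop this nonnegative term in the coercivity estimate), not $-a_3$ as you wrote --- with your sign the constructed weak solution would satisfy $\p_{y_1}\phi(L_s,\cdot)+a_3\phi(L_s,\cdot)=g_1$ instead of the stated boundary condition, although your sign-agnostic absorption via the trace inequality goes through verbatim once the sign is corrected.
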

\begin{proof}
	For $\phi,\psi\in H^1(Q)$, define the bilinear form
	\be\label{bilin_for}\no
	\B[\phi,\psi]=\int_{Q}\p_{y_1}\phi\p_{y_2}\psi dy+\int_{Q}
	a_2(y_1)\p_{y_2}\phi\p_{y_2}\psi dy-\int_{Q}a_1(y_1)\psi\p_{y_1}\phi  dy\\\no
	\quad\quad+K\int_{Q}\phi\psi dy+\int_{Q}a_0(y_1)\phi(L_s,y_2)\psi
	dy+a_3\int_{-1}^{1}\phi(L_s,y_2)\psi(L_s,y_2) d y_2,
	\ee
	and the linear functional on $H^1(Q)$
	\be\no
	l(\psi)=\int_{-1}^{1}   g_2(y_2) \psi(L_1,y_2)d y_2-\int_{-1}^{1}
	g_1(y_2)\psi(L_s,y_2)dy_2-\int_{Q}\p_{y_1}f\psi dy.
	\ee
	It's obviously that the linear functional $l(\psi)$ on $H^1(Q)$ is continuous,
	i.e,
	\be\label{funal_est}
	|l(\psi)|\leq
	C(\|(g_1,g_2)\|_{L^2(-1,1)}+\|f\|_{L^2(Q)})\|\psi\|_{H^1(Q)},
	\ee
	where we have used the trace theorem. So, what we need to do is just verify that
	the conditions of Lax-Milgram Theorem is satisfied for the bilinear form $\B$.
	The boundedness of $\B_{K}$ is trivial, we will show that $\B_{K}$ is also
	coercive. Denote $\Lambda=\min\{1,C_0\}>0$, then
	\be\no
	\Lambda\int_{Q}|\nabla\phi|^2 dy+K\int_{Q}|\phi|^2dy\leq
	\B[\phi,\phi]+\int_{Q}a_1(y_1)\phi\p_{y_1}\phi  dy\\
	-\int_{Q}a_0(y_1)\phi(L_s,y_2)\phi(y_1,y_2) dy
	-a_3\int_{-1}^{1} |\phi(L_s,y_2)|^2 dy_2\no,
	\ee
	Cauchy's inequality gives
	\be\no
	\int_{Q}a_1(y_1)\phi\p_{y_1}\phi  dy\leq C_1\epsilon \int_{Q}|\nabla\phi|^2 dy
	+\frac{C_1}{4\epsilon}\int_{Q}|\phi|^2dy,
	\ee
	and
	\be\no
	\int_{Q}a_0(y_1)\phi(L_s,y_2)\phi(y_1,y_2) dy\leq C_{tr}(L_1-L_s)C_1\epsilon
	\int_{Q}|\nabla\phi|^2 dy +\frac{C_1}{4\epsilon}\int_{Q}|\phi|^2dy,
	\ee
	Then, fix $\epsilon_0$ such that
	$C_1\epsilon_0(1+(L_1-L_s)C_{tr})<\frac{\Lambda}{2}$, and choosing
	$K=\max\{\Lambda,\frac{C_1}{\epsilon_0}\}$, thanks to the positivity of $a_3$,
	we obtain
	\be\no
	\B[\phi,\phi]\geq\frac{\Lambda}{2}\|\phi\|_{H^1(Q)},
	\ee
	the unique existence follows from the Lax-Milgram Theorem and (\ref{funal_est})
	gives the estimates (\ref{H1_est_inhom}). Thus the proof is completed.
\end{proof}
\par The unique existence of regular solution to the non-local system
(\ref{gen_2_ellip})
is established in the following proposition.
\begin{proposition}\label{uniq_exis}
For any $f\in C^{1,\al}(\bar Q)$, $g_i\in C^{\al}(\bar Q)$, there is a unique weak
solution $(\phi,\kappa)$, such that $\phi\in H^1(Q)$ and the following estimate holds
\be\label{H1_est}
\|\phi\|_{H^1(Q)}+|\ka|\leq C(\|f\|_{C^{\al}(\bar Q)}+|(g_1,g_2)|_{C^{1,\al}[-1,1]}).
\ee
Moreover, if the compatibility conditions
\be\label{fg_condi}
\p_{y_2}f(y_1,-1)=\p_{y_2}f(y_1,1)=0,\,g_i'(-1)=g_i'(1)=0,\,i=1,2,
\ee
are fulfilled, then $\phi\in C^{2,\alpha}(\bar Q)$
\be\label{c1_al_phi}
\|\phi\|_{C^{1,\al}(\bar Q)}\leq C(\|f\|_{C^{\al}(\bar
	Q)}+|(g_1,g_2)|_{C^{\al}[-1,1]}+\|\phi\|_{H^1(Q)}+|\ka|),
\ee
and
\be\label{c2_al_phi}
\|\phi\|_{C^{2,\al}(\bar Q)}\leq C(\|f\|_{C^{1,\al}(\bar
	Q)}+|(g_1,g_2)|_{C^{1,\al}[-1,1]}+\|\phi\|_{H^1(Q)}+|\ka|).
\ee
for some positive constant $C>0$.
\end{proposition}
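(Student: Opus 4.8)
The plan is to treat the pair $(\phi,\ka)$ jointly, to manufacture solutions from the regularized operator of Lemma \ref{sol_inhomo} via the Fredholm alternative, and then to upgrade regularity by a reflection argument adapted to the Neumann conditions on $y_2=\pm1$.

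First I would observe that the Lax--Milgram argument proving Lemma \ref{sol_inhomo} in fact yields a bounded solution operator $S$ for the regularized problem \eqref{gen_2_ellip_inhom}, valid for every interior source in $L^2(Q)$ (the special form $\p_{y_1}f$ plays no role, since any $L^2$ source defines a continuous functional on $H^1(Q)$) and every $g_i\in L^2(-1,1)$. For a fixed parameter $\ka$, the operator of \eqref{gen_2_ellip} differs from that of \eqref{gen_2_ellip_inhom} only by the lower-order term $K\phi$, while the $\ka$-dependence enters affinely through the interior source $a_0\ka$ and the boundary datum $a_3\ka$. Moving these to the right-hand side and composing with $S$, the problem becomes the Fredholm equation $(\mathrm{Id}-KS)\phi=S(\cdots)$ in $L^2(Q)$; since $S$ maps into $H^1(Q)$ and the embedding $H^1(Q)\hookrightarrow L^2(Q)$ is compact by Rellich, $KS$ is compact and solvability for fixed $\ka$ is equivalent to triviality of the homogeneous problem. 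Granting this, the solution $\phi=\phi[\ka]$ is affine in $\ka$, so the normalization $\phi(L_s,-1)=0$ reduces to a single scalar equation $\al\ka+\beta=0$; I would determine $\ka$ uniquely from it, the slope $\al$ being nonzero exactly when the full homogeneous problem has only the trivial solution. The estimate \eqref{H1_est} then follows from the uniform bounds on $S$, with $|\ka|$ controlled through the trace theorem.

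Everything above hinges on the \emph{uniqueness of the non-local homogeneous problem}: if $(\phi,\ka)$ solves \eqref{gen_2_ellip} with $f=g_1=g_2=0$, then $\phi\equiv0$ and $\ka=0$. I expect this to be the main obstacle. The difficulty is that the zeroth-order interaction is the non-local trace term $-a_0(y_1)(\ka+\phi(L_s,y_2))$ rather than a sign-definite local term, so the energy identity obtained by testing against $\phi$ entangles this term with uncontrolled boundary contributions coming from $a_1\p_{y_1}\phi$ and does not close. My plan is instead to exploit the signs $a_0,a_2,a_3>0$ from \eqref{sol_condi} through a maximum principle for the uniformly elliptic operator $\p_{y_1}^2+a_2\p_{y_2}^2+a_1\p_{y_1}$: the Neumann condition on $y_2=\pm1$ together with the Hopf lemma on $y_1=L_1$, where $\p_{y_1}\phi=0$, forces an extremum onto $y_1=L_s$, and there the oblique condition $\p_{y_1}\phi(L_s,\cdot)=a_3(\ka+\phi(L_s,\cdot))$ combined with the normalization $\phi(L_s,-1)=0$ pins down $\ka=0$ and $\phi\equiv0$. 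An equivalent and perhaps cleaner route, available because $a_0,a_1,a_2$ depend only on $y_1$, is to expand in the Neumann eigenbasis $\cos\big(\tfrac{k\pi(y_2+1)}{2}\big)$ in $y_2$, whereupon the problem decouples into a family of non-local two-point boundary value ODEs in $y_1$ that can be shown to admit only the trivial solution mode by mode.

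For the regularity statements I would first reflect $\phi$ evenly across $y_2=\pm1$. Because the coefficients $a_i$ are independent of $y_2$, the Neumann conditions $\p_{y_2}\phi(y_1,\pm1)=0$ make the even extension solve the same equation on a strip with the lateral boundaries removed, and the compatibility hypotheses \eqref{fg_condi}, namely $\p_{y_2}f(y_1,\pm1)=0$ and $g_i'(\pm1)=0$, guarantee that the reflected data stay $C^{1,\al}$, resp.\ $C^{\al}$, so that no artificial singularity is created at the corners $(L_s,\pm1)$ and $(L_1,\pm1)$. On the strip only the two faces $y_1=L_s,L_1$ survive, each carrying a regular oblique (conormal) condition. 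I would then treat $a_0(\ka+\phi(L_s,y_2))$ as a known forcing and bootstrap: writing the equation in divergence form with data $\p_{y_1}f$, $f\in C^{\al}$, the De Giorgi--Nash theory together with Schauder estimates for the oblique problem yields \eqref{c1_al_phi}; this makes the trace $\phi(L_s,\cdot)\in C^{1,\al}$, hence the forcing $C^{\al}$, and a second application of the interior and boundary Schauder estimates with $f\in C^{1,\al}$ gives \eqref{c2_al_phi}. Throughout, the constant $\ka$ enters these inequalities only as part of the data and is controlled by the $H^1$ bound \eqref{H1_est}.
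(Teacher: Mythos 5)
Your proposal is correct and takes essentially the same route as the paper: existence and uniqueness via the solution operator from Lemma \ref{sol_inhomo} plus the Fredholm alternative (Rellich compactness), with the crucial uniqueness of the homogeneous non-local problem settled by a maximum-principle argument (the paper defers exactly this step to Lemma 4.1 of \cite{LXY2009b}), and your fixed-$\ka$ affine reduction $\al\ka+\beta=0$ is just a reorganization of the paper's splitting $\phi=\phi_1+\phi_2$ in \eqref{ihomo_phi1}--\eqref{homo_phi2}, where $\phi_2$ is a constant and $\ka$ its negative. The regularity argument likewise coincides: even reflection across $y_2=\pm 1$ using the compatibility conditions \eqref{fg_condi} to avoid corner singularities, followed by the same two-pass Schauder bootstrap through the trace $\phi(L_s,\cdot)$.
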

\begin{proof}
The proof is divided into two steps.\\
Step 1: Regularity of  weak solutions. we will use the symmetric extension methods
to
exclude the possible singularities that may appear at the corner, which implies
that the weak solution $\phi\in H^1(Q) $ to the system (\ref{gen_2_ellip}) are
essentially more regular.
To this end, introduce the notation
\be\no
Q^{*}:=\{(y_1,y_2): L_s<y_1<L_1,\ -2<y_2<2\}.
\ee
and define the extended function $\phi^*(y)$ on $Q^*$ as
\begin{equation}\label{sym_ext}
	\begin{aligned}
		\phi^*(y)=
		\begin{cases}
			\phi(y_1,2-y_2),\, 1<y_2<2,\\
			\phi(y_1,y_2),\,-1<y_2<1,\\
			\phi(y_1,-2-y_2),-2<y_2<-1.
		\end{cases}
	\end{aligned}
\end{equation}
Then the extended function $\phi^*$ satisfies
\be\label{ext_phi}
\begin{cases}
	\p_{y_1}^2\phi^*+a_2(y_1)\p_{y_2}^2\phi^*+a_1(y_1)\p_{y_1}\phi^*-a_0(y_1)(\ka+\phi^*(L_s,y_2))=\p_{y_1}f^*,\,\text{in}\,Q^*,\\
	\p_{y_1}\phi^*(L_s,y_2)-a_3(\ka+\phi^*(L_s,y_2))=g^*_1(y_2),\\
	\p_{y_1}\phi^*(L_1,y_2)=g^*_2(y_2),\\
	\p_{y_2}\phi^*(y_1,\pm 1)=\phi^*(L_s,-1)=0,\\
\end{cases}
\ee
\par Using the standard interior and the boundary estimates for the second order
linear elliptic equations in \cite{GT1983}, we obtain that $\phi^*(y)\in
C^{1,\al}(Q^*)$ and
\be\label{est1_phi_str}
\|\phi^*\|_{C^{1,\al}(Q^*)}\leq
C(\|f^*\|_{C^{1,\al}(Q^*)}+\|(g_1,g_2)\|_{C^{1,\al}[-2,2]}+\|\phi^*(L_s,y_2)\|_{L^2[-2,2]}+|\ka|),
\ee
which implies that $\phi^*(L_s,y_2)\in C^{1,\al}[-2,2]$. Use the estimate
(\ref{est1_phi_str}) again to conclude that $\phi^*(y)\in C^{2,\al}(Q^*)$ and
\be\label{est2_phi_str}
\|\phi^*\|_{C^{1,\al}(Q^*)}\leq
C(\|f^*\|_{C^{1,\al}(Q^*)}+\|(g_1,g_2)\|_{C^{1,\al}[-2,2]}+\|\phi^*(0,y_2)\|_{L^2[-2,2]}+|\ka|).
\ee
Then (\ref{c1_al_phi}) and (\ref{c2_al_phi}) follows immediately. \\
Step 2: Existence and Uniqueness of weak solutions. Due to the linearity, any
solution $\phi$ to the problem (\ref{gen_2_ellip}) can be decomposed as
$\phi=\phi_1+\phi_2$, where $\phi_i$, i=1,2, satisfy the following equation
respectively
\be\label{ihomo_phi1}
\begin{cases}
	\p_{y_1}^2\phi_1+a_2(y_1)\p_{y_2}^2\phi_1+a_1(y_1)\p_{y_1}\phi_1-a_0(y_1)\phi_1(L_s,y_2)=\p_{y_1}f,\,\text{in}\,Q,\\
	\p_{y_1}\phi_1(L_s,y_2)-a_3\phi_1(L_s,y_2)=g_1(y_2),\\
	\p_{y_1}\phi_1(L_1,y_2)=g_2(y_2),\\
	\p_{y_2}\phi_1(y_1,\pm 1)=0,
\end{cases}
\ee
and
\be\label{homo_phi2}
\begin{cases}
	\p_{y_1}^2\phi_2+a_2(y_1)\p_{y_2}^2\phi_2+a_1(y_1)\p_{y_1}\phi_2-a_0(y_1)(\ka+\phi_2(L_s,y_2))=0,\,\text{in}\,Q,\\
	\p_{y_1}\phi_2(L_s,y_2)-a_3(\ka+\phi_2(L_s,y_2))=0,\\
	\p_{y_1}\phi_2(L_1,y_2)=\p_{y_2}\phi_2(y_1,\pm 1)=0,\\
	\phi_2(L_s,-1)=-\phi_1(L_s,-1).
\end{cases}
\ee
Combing the Lemma \ref{sol_inhomo} with the Fredholm alternative theorem, one can
easily derive (\ref{ihomo_phi1}) has a unique $H^1(Q)$ solution $\phi_1$ which
satisfying (\ref{H1_est}). On the other hand, one can prove that the
only weak solution to (\ref{homo_phi2}) must be
$(\phi_2,\ka)=(-\phi_1(L_s,-1),\phi_1(L_s,-1))$ by applying the maximum principle.
For the detailed proof, one can refer to Lemma 4.1 in \cite{LXY2009b}. Thus the
proof is completed.
\end{proof}

At this point, we can easily illustrate the well-posedness to the reformulated
problem (\ref{phi_eq})-(\ref{phi_BC}).
\begin{lemma}\label{phi_rest}
The problem (\ref{phi_eq})-(\ref{phi_BC}) has a unique solution $(\phi,\ka)\in
C^{2,\al}(\bar Q)\times\mathbb{R}$ satisfying
\begin{equation}\label{phi_Ck_al}
	\begin{aligned}
		&\|\phi\|_{C^{k,\al}(\bar Q)}+|\ka|\leq C(\|(G_1,G_2)\|_{C^{k-1,\al}(\bar
		Q)}\\
		&+\|(R_6(y_2),R_7(y_2))\|_{_{C^{1,\al}[-1,1]}}+\|\epsilon\hat
		P_{ex}(y_2)\|_{C^{1,\al}[-1,1]}),\,k=1,2,
	\end{aligned}
\end{equation}
for some positive constant C.
\end{lemma}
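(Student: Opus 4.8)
The plan is to derive Lemma~\ref{phi_rest} directly from Proposition~\ref{uniq_exis} by verifying that the reformulated problem (\ref{phi_eq})--(\ref{phi_BC}) is an instance of the abstract non-local elliptic system (\ref{gen_2_ellip}) with an appropriate identification of coefficients, source terms, and the free constant. The first step is to recognize that the equation (\ref{phi_eq}) matches the form in (\ref{gen_2_ellip}) upon setting $\kappa = v_4(-1)/b_0$, together with the coefficient identifications $a_2(y_1)=\frac{1}{1-\bar M^2}$, $a_1(y_1)=\la_1(y_1)$, $a_0(y_1)=-\la_0(y_1)b_0$, and reading the right-hand side of (\ref{phi_eq}) as $\p_{y_1}f$ for a suitable $f$. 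The boundary conditions in (\ref{phi_BC}) correspondingly give $a_3 = b_0(b_2-\la(L_s))$, $g_1(y_2)=R_6(y_2)+\int_{-1}^{y_2}G_2(L_s,\tau)\,d\tau$, and $g_2(y_2)=-\frac{\epsilon \hat P_{ex}(y_2)}{\bar u(L_1)}+R_7(y_2)+\int_{-1}^{y_2}G_2(L_1,\tau)\,d\tau$.

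\emph{Verifying the structural hypotheses} is the crucial middle step. I must check that the coefficients satisfy the sign and ellipticity conditions (\ref{sol_condi}). Since the subsonic region has $\bar M^2<1$, the coefficient $a_2=\frac{1}{1-\bar M^2}$ is bounded and positive as required, giving uniform ellipticity. The positivity $0<C_0<a_0$ must be extracted from the explicit formulas for $\la_0$ and the sign information on the background quantities ($B_1>0$ and $b_2<0$ from (\ref{b2})); I expect this is where the effect of the external force enters, ensuring the correct sign that makes the maximum principle applicable in the existence step of Proposition~\ref{uniq_exis}. The positivity of $a_3$ similarly requires checking $b_0(b_2-\la(L_s))>0$, which again rests on the background estimates from Lemma~\ref{1dtransonic-shock}.

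\emph{The main obstacle} is the regularity bookkeeping connecting the abstract estimate (\ref{c2_al_phi}) to the stated estimate (\ref{phi_Ck_al}). Two issues arise. First, I must confirm the compatibility conditions (\ref{fg_condi}) hold: this uses $\hat P_{ex}'(\pm 1)=0$ from (\ref{comp-pressure}) together with the already-established boundary vanishing $\p_{y_2}G_1(y_1,\pm1)=0$, $G_2(y_1,\pm1)=0$ from (\ref{err_G}) and $R_6'(\pm1)=R_7'(\pm1)=0$ from (\ref{err_R}); these guarantee $g_i'(\pm1)=0$ and $\p_{y_2}f(y_1,\pm1)=0$. Second, Proposition~\ref{uniq_exis} bounds $\|\phi\|_{C^{2,\al}}$ in terms of $\|f\|_{C^{1,\al}}$ plus the $H^1$-norm and $|\ka|$, whereas (\ref{phi_Ck_al}) is phrased purely in terms of the data $(G_1,G_2,R_6,R_7,\epsilon\hat P_{ex})$. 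I would therefore absorb the $\|\phi\|_{H^1}+|\ka|$ term using the a~priori estimate (\ref{H1_est}), which already controls it by the data, and then translate $\|f\|_{C^{k-1,\al}}$ and $\|(g_1,g_2)\|_{C^{k-1,\al}}$ back into $\|(G_1,G_2)\|_{C^{k-1,\al}}$ and $\|(R_6,R_7)\|_{C^{1,\al}}$ using the explicit definitions, noting that the integral operators $\int_{-1}^{y_2}G_i\,d\tau$ are bounded on Hölder spaces. Chaining these estimates for $k=1,2$ yields (\ref{phi_Ck_al}) and completes the proof.
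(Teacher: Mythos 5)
Your proposal follows the paper's proof essentially verbatim: identify (\ref{phi_eq})--(\ref{phi_BC}) as an instance of the abstract problem (\ref{gen_2_ellip}) with $\ka=v_4(-1)/b_0$ and the coefficient identifications you list, verify the solvability condition (\ref{sol_condi}) by explicit computation of $a_0,a_1,a_2,a_3$, and invoke Proposition \ref{uniq_exis}, with the compatibility conditions (\ref{fg_condi}) supplied by (\ref{err_G}), (\ref{err_R}) and (\ref{comp-pressure}), and the $\|\phi\|_{H^1}+|\ka|$ terms absorbed via (\ref{H1_est}). The only slip is in your sign bookkeeping for $a_0$: the paper's computation gives $a_0=-\frac{2 c^2(\bar\rho^+)\bar f\, b_0 b_3}{\bar u\,(c^2(\bar\rho^+)-\bar u^2)^2}$, so its positivity rests on $\bar f>0$ together with $b_3<0$ (which holds because $\bar\rho^+>\bar\rho^-$ across the entropy-admissible shock in (\ref{b3})), not on $b_2<0$ as you suggest --- $b_2$ instead enters the positivity of $a_3=b_0(b_2-\la(L_s))$, which the paper simplifies to $b_0\,\frac{c^2(\bar\rho^+)(\bar\rho^+-\bar\rho^-)\bar f(L_s)}{\bar\rho^+\bar u\,(c^2(\bar\rho^+)-\bar u^2)}>0$.
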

\begin{proof}
It suffices to verify the solvability condition (\ref{sol_condi}) for the problem
(\ref{phi_eq})-(\ref{phi_BC}). A direct  but tedious computation shows that
\be\no
&&a_0(y_1)=-b_0\la_{0}(y_1)=-\frac{2 c^2(\bar\rho^+)\bar f
b_0b_3}{\bar u(c^2(\bar\rho^+)-\bar u^2)^2}>0,\\ \no
&&a_1(y_1)=\la_{1}(y_1)=\frac{\gamma \bar u^2+c^2(\bar\rho^+)}{(c^2(\bar\rho^+)-\bar u^2)^2}\bar f>0,\\ \no
&&a_2(y_1)=\frac{1}{1-\bar M^2}>0,\\ \no
&&a_3=b_0(b_2-\la(L_s))=b_0\frac{c^2(\bar\rho^+)(\bar\rho^+ -\bar\rho^-)\bar f(L_s)}{\bar\rho^+\bar u(c^2(\bar\rho^+)-\bar u^2)}>0,
\ee
since the background solution is subsonic and smooth, the upper bound is trivial.
Hence,
Proposition \ref{uniq_exis} implies that there exist a unique weak solution
$(\phi,\ka)$, and the estimate (\ref{phi_Ck_al}) follows
from  (\ref{c1_al_phi}) and (\ref{c2_al_phi}).
\end{proof}
In view of the analysis of the problem (\ref{phi_eq})-(\ref{phi_BC}), the well-posedness of the equation (\ref{v1v2-lin})-(\ref{v1v2_BC_lin}) follows.
\begin{lemma}\label{v1v2_lema}
The problem (\ref{v1v2-lin})-(\ref{v1v2_BC_lin}) admits a unique solution
$(v_1,v_2,v_4(-1))\in C^{2,\al}(\bar Q)\times \mathbb R$ satisfying
\begin{equation}\label{v1v2_est}
	\begin{aligned}
		&\|(v_1,v_2)\|_{C^{k,\al}(\bar Q)}+|v_4(-1)|\leq
		C(\|(G_1,G_2)\|_{C^{k-1,\al}(\bar Q)}\\
		&+\|(R_6(y_2),R_7(y_2))\|_{_{C^{k,\al}[-1,1]}}+\|\epsilon\hat
		P_{ex}(y_2)\|_{C^{k,\al}[-1,1]}),\,\,k=1,2,
	\end{aligned}
\end{equation}
and the compatibility conditions
\be\label{cap_condi}
\p_{y_2}v_1(y_1,\pm 1)=0,\,\,\p_{y_2}v_2(y_1,\pm 1)=\p^2_{y_2}v_2(y_1,\pm 1)=0.
\ee
\end{lemma}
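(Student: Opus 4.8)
The plan is to reap the reformulation already in place: through the potential $\phi$ introduced in (\ref{phi}), the first order system (\ref{v1v2-lin}) together with its boundary conditions (\ref{v1v2_BC_lin}) is equivalent to the non-local elliptic problem (\ref{phi_eq})--(\ref{phi_BC}), under the identification $v_4(-1)=b_0\ka$ and the relation $\int_{-1}^{y_2}b_0v_2(L_s,\tau)\,d\tau=b_0\phi(L_s,y_2)$, which follows from $v_2=\p_{y_2}\phi$ and $\phi(L_s,-1)=0$. Since Lemma \ref{phi_rest} has already checked that (\ref{phi_eq})--(\ref{phi_BC}) is of the form (\ref{gen_2_ellip}) with coefficients obeying (\ref{sol_condi}), I would first invoke Proposition \ref{uniq_exis} to obtain the unique pair $(\phi,\ka)\in C^{2,\al}(\bar Q)\times\mathbb{R}$ with the estimate (\ref{phi_Ck_al}). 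Setting $v_4(-1):=b_0\ka$ and defining $(v_1,v_2)$ by (\ref{rprstv1v2}) then produces a solution of (\ref{v1v2-lin})--(\ref{v1v2_BC_lin}): the second equation of (\ref{v1v2-lin}) holds identically because $(v_1,v_2)$ is generated by a potential, so that after the $\la$-terms cancel one is left with $\p_{y_1}\p_{y_2}\phi-\p_{y_2}(\p_{y_1}\phi-\int_{-1}^{y_2}G_2\,d\tau)=G_2$; the first equation is precisely (\ref{phi_eq}); and the three boundary conditions are inherited from (\ref{phi_BC}). Conversely, any solution of (\ref{v1v2-lin})--(\ref{v1v2_BC_lin}) yields through (\ref{phi}) a solution of (\ref{phi_eq})--(\ref{phi_BC}), so uniqueness is inherited from Proposition \ref{uniq_exis}.

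The delicate point is regularity. The representation (\ref{rprstv1v2}) together with $\phi\in C^{2,\al}(\bar Q)$ yields at first only $(v_1,v_2)\in C^{1,\al}(\bar Q)$, since $\p_{y_1}\phi$ and $\p_{y_2}\phi$ lose one order. To upgrade to $C^{2,\al}$ I would bootstrap tangentially. Because every coefficient of (\ref{phi_eq}) depends on $y_1$ alone, differentiating the equation and its boundary data in $y_2$ shows that $v_2=\p_{y_2}\phi$ solves a non-local elliptic problem of the same type, with the free constant now absent, whose right hand side $\p_{y_2}(G_1+\la_1\int_{-1}^{y_2}G_2\,d\tau+\p_{y_1}\int_{-1}^{y_2}G_2\,d\tau)$ lies in $C^{\al}(\bar Q)$ and whose boundary data are built from $R_6',R_7',\hat P_{ex}'\in C^{1,\al}$; carrying out this argument on the extended domain $Q^{*}$, exactly as in Step 1 of Proposition \ref{uniq_exis} so as to suppress the corner singularities, gives $v_2\in C^{2,\al}(\bar Q)$. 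With $v_2\in C^{2,\al}$ in hand, the two first derivatives of $v_1$ are read off directly from the system (\ref{v1v2-lin}): its first equation expresses $\p_{y_1}v_1$ through $\p_{y_2}v_2$, $v_1$, the $y_2$-only non-local term and $G_1$, while its second equation gives $\p_{y_2}v_1=\p_{y_1}v_2+\la b_0v_2(L_s,y_2)-G_2$; both right hand sides lie in $C^{1,\al}(\bar Q)$, whence $v_1\in C^{2,\al}(\bar Q)$. The estimate (\ref{v1v2_est}) for $k=1,2$ then follows by feeding (\ref{phi_Ck_al}) into these relations and using $|v_4(-1)|=|b_0|\,|\ka|$; note that the $C^{k,\al}$ norms of $R_6,R_7,\epsilon\hat P_{ex}$ on the right of (\ref{v1v2_est}) dominate the $C^{1,\al}$ data appearing in (\ref{phi_Ck_al}), which is what legitimizes the $k=2$ bound.

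Finally, the compatibility conditions (\ref{cap_condi}) follow from parity. The even extension (\ref{sym_ext}) makes $\phi$ even in $y_2$ about each wall $y_2=\pm1$; combined with $G_2(y_1,\pm1)=0$ from (\ref{err_G}) and $R_6'(\pm1)=R_7'(\pm1)=\hat P_{ex}'(\pm1)=0$ from (\ref{err_R}) and (\ref{comp-pressure}), the representation (\ref{rprstv1v2}) makes $v_1$ even and $v_2$ odd in $y_2$ about the walls, from which (\ref{cap_condi}) is immediate. I expect the regularity bootstrap to be the main obstacle: the potential buys ellipticity at the cost of one derivative, so recovering the full $C^{2,\al}$ regularity of the velocity forces one to differentiate the non-local equation tangentially and to solve the resulting problem on the symmetrically extended domain, precisely where the nonlocal coupling through $\phi(L_s,y_2)$ and the four corners would otherwise block the Schauder estimates.
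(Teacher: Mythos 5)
Your overall architecture coincides with the paper's: reduce \eqref{v1v2-lin}--\eqref{v1v2_BC_lin} to the non-local problem \eqref{phi_eq}--\eqref{phi_BC} through the potential \eqref{phi}, invoke Proposition \ref{uniq_exis} via Lemma \ref{phi_rest} to get the unique pair $(\phi,\ka)$ with $v_4(-1)=b_0\ka$, recover $(v_1,v_2)$ from \eqref{rprstv1v2}, and then win back the derivative lost in passing to the potential by an elliptic bootstrap, using reflection across $y_2=\pm1$ to neutralize the corners. Where you diverge is the bootstrap device: you differentiate the $\phi$-equation tangentially so that $v_2=\p_{y_2}\phi$ solves an elliptic problem with Dirichlet wall data (note this forces an \emph{odd} extension, not the even extension \eqref{sym_ext} of Step 1 that you cite; the oddly extended data stay $C^{\al}$ precisely because of $\p_{y_2}G_1(y_1,\pm1)=G_2(y_1,\pm1)=0$ and $R_6'(\pm1)=R_7'(\pm1)=\hat P_{ex}'(\pm1)=0$), and then read $\p_{y_1}v_1,\p_{y_2}v_1$ off the first-order system. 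The paper goes the dual way: it freezes the non-local terms on the right (system \eqref{v1v2_conser_form}), cross-differentiates to obtain the divergence-form equation \eqref{2order_v1} for $v_1$ with Neumann wall data and even reflection, and then reads $v_2$ off the system. For $k=2$ both routes work, and your parity argument for \eqref{cap_condi} is acceptable (the paper instead gets $\p^2_{y_2}v_2(y_1,\pm1)=0$ directly by differentiating the first equation of \eqref{v1v2_conser_form} along the walls).

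The genuine gap is the $k=1$ case of \eqref{v1v2_est}. Your tangential differentiation consumes one derivative of the data ($\p_{y_2}G_1$, $\p_{y_1}G_2$, $R_6'$, $R_7'$ all appear), so it only yields the $k=2$ bound; and your fallback for $k=1$ --- ``feeding \eqref{phi_Ck_al} into \eqref{rprstv1v2}'' --- passes through $\|\phi\|_{C^{2,\al}(\bar Q)}$, which by \eqref{c2_al_phi} costs $\|(G_1,G_2)\|_{C^{1,\al}(\bar Q)}$, whereas \eqref{v1v2_est} with $k=1$ permits only $\|(G_1,G_2)\|_{C^{\al}(\bar Q)}$ on the right; using \eqref{phi_Ck_al} with $k=1$ instead controls merely $\|(v_1,v_2)\|_{C^{\al}(\bar Q)}$. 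This is not a cosmetic loss: the sharp $k=1$ estimate is exactly what Section 4 applies to the difference system \eqref{diff_v1v2}--\eqref{BC_diff_v1v2}, whose right-hand sides contain $O(\epsilon)\nabla\hat w_i$ and hence are only $C^{\al}$; with your weaker bound the contraction of $T$ in $(C^{1,\al}(\bar Q))^3\times C^{2,\al}[-1,1]$ does not close. The paper's divergence structure is the point of its detour: in \eqref{2order_v1} the source is $\p_{y_1}((1-\bar M^2)\mathcal{G}_1)-\p_{y_2}\mathcal{G}_2$ with $\mathcal{G}_1,\mathcal{G}_2\in C^{\al}$ and Dirichlet data $\mathcal{R}_6,\mathcal{R}_7\in C^{1,\al}$, so $C^{1,\al}$ Schauder theory for divergence-form equations gives $v_1\in C^{1,\al}$ at the $C^{\al}$ price, after which $\p_{y_1}v_2$ and $\p_{y_2}v_2$ follow algebraically from \eqref{v1v2_conser_form}. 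Your route can be repaired in the same spirit --- the differentiated equation for $w=\p_{y_2}\phi$ is itself in divergence form, with source $\p_{y_2}G_1+\p_{y_1}G_2+\la_1G_2$, and one can apply $C^{1,\al}$ estimates for weak solutions (justifying the differentiation by difference quotients, since $\phi$ is a priori only $C^{2,\al}$) --- but as written your $k=1$ claim is unsupported.
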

\begin{proof}
Combine (\ref{rprstv1v2}) and (\ref{phi_Ck_al}), we conclude that there is a unique
solution $(v_1,v_2,v_4(-1))\in C^{1,\al}\times\mathbb{R}$ such that
\be\label{v1v2_low_est}
&&\|(v_1,v_2)\|_{C^{\al}(\bar Q)}+|\v_4(-1)|\leq C(\|(G_1,G_2)\|_{C^{\al}(\bar Q)}
\\\no
&&+\|(R_6(y_2),R_7(y_2))\|_{_{C^{1,\al}[-1,1]}}+\|\epsilon\hat
P_{ex}(y_2)\|_{C^{1,\al}[-1,1]}),
\ee
The similar estimates also holds true for $\|(v_1,v_2)\|_{C^{1,\al}(\bar Q)}$, but
we can derive an even better estimate by rewriting (\ref{v1v2-lin}) into an elliptic
equation of $v_1$. To this end, we firstly rewrite the system (\ref{v1v2-lin}) as
\be\label{v1v2_conser_form}
\begin{cases}
  \p_{y_1} v_1+\frac{1}{1-\bar M^2}\p_{y_2}v_2+\la_1(y_1)v_1=\mathcal{G}_1(y),\\
  \p_{y_1}v_2-\p_{y_2}v_1 =\mathcal{G}_2(y),\\
  v_1(L_s,y_2)=\mathcal{R}_6(y_2),\\
  v_1(L_1,y_2)=\mathcal{R}_7(y_2),\\
v_2(y_1,\pm 1)=0,
\end{cases}
\ee
where
\be\no
&&\mathcal{G}_1(y)=G_1(y)-\frac{B_3(y_1)b_3+B_4(y_1)}{(c^2(\bar\rho^+)
-(\bar u^+)^2)}(v_4(-1)+\int_{-1}^{y_2}b_0 v_2(L_s,\tau) d\tau),\\ \no
&&\mathcal{G}_2(y)=G_2(y)+b_0 v_2(L_s,y_2),\\ \no
&&\mathcal{R}_6(y_2)=b_2(v_4(-1)+\int_{-1}^{y_2}b_0v_2(L_s,\tau) d\tau)+R_6(y_2),\\ \no
&&\mathcal{R}_7(y_2)=\frac{b_3}{\bar u(L_1)}(v_4(-1)+\int_{-1}^{y_2}b_0v_2(L_s,\tau) d\tau)- \frac{\epsilon \hat P_{ex}(y_2)}{\bar u(L_1)}+R_7(y_2),\no
\ee
and
\be
\p_{y_2}\mathcal{G}_1(y_1,\pm 1)=0,\,\mathcal{G}_2(y_1,\pm 1)=0,\,\mathcal{R}'_6(\pm 1)=0,\,\mathcal{R}'_7(\pm 1)=0.
\ee
Note that (\ref{rprstv1v2}) and the boundary conditions of $\phi$ implies that
\be\no
v_2(y_1,\pm 1)=0,\,\p_{y_2}v_1(y_1,\pm 1)=0.
\ee

A simple cancellation yields to
\be\label{2order_v1}
\begin{cases}
\p_{y_1}((1-\bar M^2)\p_{y_1} v_1)+\p^2_{y_2}v_1+\p_{y_1}(\la_1(1-\bar M^2)v_1)=\p_{y_1}((1-\bar M^2)\mathcal{G}_1)-\p_{y_2}\mathcal{G}_2,\\
v_1(L_s,y_2)=\mathcal{R}_6(y_2),\,v_1(L_1,y_2)=\mathcal{R}_7(y_2),\\
\p_{y_2}v_1(y_1,\pm 1)=0,
\end{cases}
\ee
Since the boundary conditions in (\ref{2order_v1}) are compatible at the corners, we
obtain the following estimates for $v_1$ by using the symmetric extension defined by
(\ref{sym_ext})
\begin{equation}\label{est_v1_hig}
\begin{aligned}
\|v_1\|_{C^{k,\al}(\bar Q)}\leq&
\,C(\|v_2\|_{C^{k-1,\al}(\bar Q)}+|v_4(-1)|+\|(G_1,G_2)\|_{C^{k-1,\al}(\bar Q)}\\
&+\|\hat P_{ex}\|_{C^{k,\al}[-1,1]}+\|(R_6,R_7)\|_{C^{k,\al}[-1,1]}),\,k=1,2.
\end{aligned}
\end{equation}
This estimates together with (\ref{v1v2_conser_form}) also implies that
\begin{equation}\label{est_v2_hig}
\begin{aligned}
\|v_2\|_{C^{k,\al}(\bar Q)}\leq&
\,C(\|v_2\|_{C^{k-1,\al}(\bar Q)}+|v_4(-1)|+\|(G_1,G_2)\|_{C^{k-1,\al}(\bar Q)}\\
&+\|\hat P_{ex}\|_{C^{k,\al}[-1,1]}+\|(R_6,R_7)\|_{C^{k,\al}[-1,1]}),\,k=1,2.
\end{aligned}
\end{equation}
Hence, (\ref{v1v2_est}) is follows from (\ref{v1v2_low_est}) and (\ref{est_v1_hig})-(\ref{est_v2_hig}). Finally, differentiating the first equation in (\ref{v1v2_conser_form}) with respect to $x_2$ and  combining with  $\p_{y_2}\mathcal{G}_1(y_1,\pm 1)=0$ and $\p_{y_2}v_1(y_1,\pm 1)=0$, we obtain
\be\no
\p_{y_2}^2v_2(y_1,\pm 1)=0,
\ee
which gives the compatibility condition (\ref{cap_condi}). Thus, the proof is
complete.
\end{proof}
\section{A priori estimates and proofs of main results}

In this section, we will use the Banach contraction mapping theorem to prove Theorem
\ref{thm3.1}. Given any $\hat{V}\in \mathcal{V}_{\delta}$,  we could establish some
a priori estimates to the linearized problems defined in subsection \ref{iteration},
and construct a
contractible mapping from $\V_{\delta}$ into itself so that there exits a unique fixed point, which is the solutions obtained in Theorem \ref{thm3.1} and
the proof of Theorem \ref{thm3.1} will be finished.

Lemma \ref{v1v2_lema} implies that there is a unique solution $(v_1,v_2,v_4(-1))\in
C^{2,\al}(\bar Q)\times \mathbb R$ to the system (\ref{v1v2-lin})-(\ref{v1v2_BC_lin}) satisfying
\begin{equation}\label{est_v1v2_fin}
	\begin{aligned}
		&\|(v_1,v_2)\|_{C^{2,\al}(\bar Q)}+|v_4(-1)|\leq
		C(\|(G_1,G_2)\|_{C^{1,\al}(\bar Q)}\\
		&+\|(R_6(y_2),R_7(y_2))\|_{_{C^{2,\al}[-1,1]}}+\|\epsilon\hat
		P_{ex}(y_2)\|_{C^{2,\al}[-1,1]})\\
		&\leq C(\epsilon+\delta^2),
	\end{aligned}
\end{equation}
and the compatibility condition (\ref{cap_condi}) holds true.

The shock curve $v_4$ is given by (\ref{shoc_expr}), which satisfies
\be\label{est_v4_fin}
\|v_4\|_{C^{3,\al}[-1,1]}\leq C(|v_4(-1)|+\|v_2\|_{C^{2,\al}(\bar Q)}+\|F_5\|_{C^{2,\al}(\bar Q)})\leq C(\epsilon+\delta^2).
\ee
Moreover, it follows from (\ref{cap_condi}) and (\ref{err_F5}) that
\be\label{cap_codi_v4}
v_4'(\pm 1)=0=v_4^{(3)}(\pm 1).
\ee

It remains to solve $v_3$. Due to  (\ref{v3_expr}), combining with (\ref{err_F6}) and (\ref{cap_codi_v4}), we obtain the estimate
\be\label{est_v3_fin}
\|v_3\|_{C^{2,\al}(\bar Q)}\leq C(\|v_4\|_{C^{2,\al}[-1,1]}+\|F_6\|_{C^{2,\al}(\bar Q)})\leq C(\epsilon+\delta^2),
\ee
and the compatibility condition
\be\label{cap_codi_v3}
\p_{y_2}v_3(y_1,\pm 1)=0.
\ee
Taking $\delta=O(1)\epsilon$, then for any given $\hat V\in \V_{\delta}$ we can define a continuous mapping $T:\V_{\delta} \rightarrow \V_{\delta}$ as
\be\label{def_T}
T\hat V=V,
\ee
due to the iteration scheme introduced in the previous section and the estimates (\ref{est_v1v2_fin})-(\ref{cap_codi_v3}). Finally, we show that the mapping is also contractible in the space  $(C^{1,\al}(\bar Q))^3\times C^{2,\al}[-1,1]$.

For arbitrarily given two states $\hat V_i=(\hat v_1^i,\hat v_2^i,\hat v_3^i,\hat v_4^i)\in \V_{\delta},i=1,2$ with the corresponding fluid variable $(\hat u_1^i,\hat u_2^i,\hat B^i,\hat\xi^i)$, set
\be\no
V_i=T\hat V_i,\,i=1,2,
\ee
where $V_i=(v_1^i,v_2^i,v_3^i,v_4^i)$. For the convenience, we denote $\hat W=\hat V_1-\hat V_2$ and $W=V_1-V_2$, or equivalently,
\be\no
\hat w_k=\hat v_k^1-\hat v_k^2,\,\,w_k=v_k^1-v_k^2,\,\,1\leq k \leq 4.
\ee
The equation (\ref{shock3}) implies that
\be\label{diff_v4}
w_4'=b_0w_2+O(\epsilon)\sum_{i=1}^{4}\hat w_i,
\ee
which yields that
\be\label{est_diff_v4}
\|w_4'\|_{C^{1,\al}[-1,1]}\leq C\|w_2\|_{C^{1,\al}(\bar Q)}+C\epsilon (\sum_{i=1}^{3}\|\hat w_i\|_{C^{1,\al}(\bar Q)}+\|\hat w_4\|_{C^{1,\al}[-1,1]}).
\ee
It follows from (\ref{v3_expr}) that
\be\label{diff_v3}
w_3=b_3 w_4-b_3\int_{\beta_1(y)}^{\beta_2(y)}(\hat v_4^1(\tau))'d\tau+b_3\int_{y_2}^{\beta_2(y)}\hat w_4'(\tau)d\tau+O(\epsilon)\sum_{i=1}^{3}\hat w_i,
\ee
where $\beta_i,i=1,2$ is the initial position such that the corresponding characteristic $y_2^i(s,\beta_i)$ going through $(y_1,y_2)$ with $y_2^i(L_s)=\beta_i$. It is easy to verify that
\be\label{est_beta}
\|\beta_1(y)-\beta_2(y)\|_{C^{1,\al}(\bar Q)}\leq C(\|\hat w_1\|_{C^{1,\al}(\bar Q)}+\|\hat w_2\|_{C^{1,\al}(\bar Q)}+\|\hat w_4\|_{C^{2,\al}[-1,1]}),
\ee
thus,
\be\label{est_diff_v3}
\|w_3\|_{C^{1,\al}(\bar Q)}\leq C\| w_4\|_{C^{1,\al}[-1,1]}+C\epsilon(\sum_{i=1}^{3}\|\hat w_i\|_{C^{1,\al}(\bar Q)}+\|\hat w_4\|_{C^{2,\al}[-1,1]}).
\ee
It is straightforward to show that $w_1,w_2$ satisfies
\be\label{diff_v1v2}
\begin{cases}
  \p_{y_1} w_1+\frac{1}{1-\bar M^2}\p_{y_2}w_2+\la_1(y_1)w_1+\la_2(y_1)(w_4(-1)+\int_{-1}^{y_2}b_0w_2(L_s,\tau)d\tau)\\
 =\sum_{i=1}^{4}(O(\epsilon)\hat w_i+O(\epsilon)\nabla \hat w_i)+O(\epsilon)(\beta_1-\beta_2)+O(1)\int_{y_2}^{\beta_2(y)}\hat w_4'(\tau)d\tau,\\
  \p_{y_1}w_2-\p_{y_2}[w_1-\la(y_1) (w_4(-1)+\int_{-1}^{y_2}b_0w_2(L_s,\tau) d\tau)] \\
  =\sum_{i=1}^{4}(O(\epsilon)\hat w_i+O(\epsilon)\nabla \hat w_i)+O(\epsilon)\p_{y_2}(\beta_1-\beta_2)+O(1)\p_{y_2}\int_{y_2}^{\beta_2(y)}\hat w_4'(\tau)d\tau,
\end{cases}
\ee
and the boundary conditions
\be\label{BC_diff_v1v2}
\begin{cases}
  w_1(L_s,y_2)=b_2(\hat w_4(-1)+\int_{-1}^{y_2}b_0\hat w_2(L_s,\tau) d\tau)+\sum_{i=1}^{4}O(\epsilon)\hat w_i,\\
  w_1(L_1,y_2)=\frac{b_3}{\bar u(L_1)}(\hat w_4(-1)+\int_{-1}^{y_2}b_0\hat w_2(L_s,\tau) d\tau)+\sum_{i=1}^{4}
  O(\epsilon)\hat w_i\\
  +O(\epsilon)(\beta_1-\beta_2)(L_1,y_2)
  +O(1)\int_{y_2}^{\beta_2(L_1,y_2)}\hat w_4'(\tau)d\tau,\\
w_2(y_1,\pm 1)=0,
\end{cases}
\ee
where
\be\no
\la_2(y_1)=\frac{B_3(y_1)b_3+B_4(y_1)}{c^2(\bar\rho^+)-(\bar u^+)^2}.
\ee
Then, applying the estimate (\ref{v1v2_est}) to the system (\ref{diff_v1v2})-(\ref{BC_diff_v1v2}) with $k=1$ and together with (\ref{est_beta}), we obtain
\be\label{est_fiff_v1v2}
\|(w_1,w_2)\|_{C^{1,\al}(\bar Q)}+|w_4(-1)|\leq
C\epsilon(\sum_{i=1}^{3}\|\hat w_i\|_{C^{1,\al}(\bar Q)}+\|\hat w_4\|_{C^{2,\al}[-1,1]}).
\ee
Finally, collecting all these estimates above leads to
\be\label{est_diff_fin}
\sum_{i=1}^{3}\| w_i\|_{C^{1,\al}(\bar Q)}+\| w_4\|_{C^{2,\al}[-1,1]}\leq C\epsilon(\sum_{i=1}^{3}\|\hat w_i\|_{C^{1,\al}(\bar Q)}+\|\hat w_4\|_{C^{2,\al}[-1,1]}).
\ee
By \eqref{est_diff_fin}, there is a small constant $\epsilon_0$ such
that for all $\epsilon\in(0,\epsilon_0]$, the mapping $T$ defined by (\ref{def_T})
is  contractible in the Banach space  $(C^{1,\al}(\bar Q))^3\times C^{2,\al}[-1,1]$.
Therefore, there exists a unique solution $V$ in $(C^{1,\al}(\bar Q))^3\times
C^{2,\al}[-1,1]$. Due to the Lemma \ref{v1v2_lema} and the a priori estimates
(\ref{est_v4_fin}),(\ref{est_v3_fin}), we know that $V$ also belongs to
$\V_{\delta}$. It follows that $V$ satisfies the estimates (\ref{est_th3.1}). Thus ,
the proof of Theorem \ref{thm3.1} is complete. Theorem \ref{thm1} is a direct inference of Theorem \ref{thm3.1}. We omit the details.



\par {\bf Acknowledgement.} Weng is partially supported by National Natural Science Foundation of China 11701431, 11971307, 12071359.

\bibliographystyle{amsplain}

\end{document}